   \numberwithin{equation}{section}
\theoremstyle{plain}
\newtheorem{theorem}{Theorem}
\newtheorem{lemma}[theorem]{Lemma}
\newtheorem{definition}[theorem]{Definition}
\numberwithin{equation}{section} \numberwithin{theorem}{section}
\newproof{pot}{{\bf{Proof of Theorem \ref{Thm1.1}}\rm}}
\newenvironment{remark}[1][Remark:]{\begin{trivlist}
\item[\hskip \labelsep {\bfseries #1}]}{\end{trivlist}}
\begin{document}
\begin{frontmatter}
\author[a]{Yuxuan Zhang}
\ead{zhangyx595@nenu.edu.cn}
\author[a]{Xiaojun Chang\footnotemark[1]}
\ead{changxj100@nenu.edu.cn}
\author[b]{Lin Chen}
\ead{clzj008@163.com}

\address[a]{School of Mathematics and Statistics $\&$ Center for Mathematics and Interdisciplinary Sciences,\\
 Northeast Normal University, Changchun 130024, Jilin,
 PR China}
\address[b]{School of Mathematics and Statistics, Yili Normal University, Yining, 835000, China}

\footnotetext{\footnotemark[1]{Corresponding author.}}

\title{Standing waves with prescribed mass for NLS equations with Hardy potential in the half-space under Neumman boundary condition}

\begin{abstract}
 Consider the Neumann problem:
\begin{eqnarray*}
\begin{cases}
	&-\Delta u-\frac{\mu}{|x|^2}u +\lambda u =|u|^{q-2}u+|u|^{p-2}u ~~~\mbox{in}~~\mathbb{R}_+^N,~N\ge3,\\
    &\frac{\partial u}{\partial \nu}=0 ~~ \mbox{on}~~ \partial\mathbb{R}_+^N
\end{cases}
\end{eqnarray*}
with the prescribed mass:
    \begin{equation*}
         \int_{\mathbb{R}_+^N}|u|^2 dx=a>0,
    \end{equation*}
where $\mathbb{R}_+^N$ denotes the upper half-space in $\mathbb{R}^N$, $\frac{1}{|x|^2}$ is the Hardy potential, $2<q<2+\frac{4}{N}<p<2^*$, $\mu>0$, $\nu$ stands for the  outward unit normal vector to $\partial \mathbb{R}_+^N$, and $\lambda$ appears as a Lagrange  multiplier. Firstly, by applying Ekeland's variational principle, we establish the existence of normalized solutions that correspond to local minima of the associated energy functional. Furthermore, we find a second normalized solution of mountain pass type by employing a parameterized minimax principle that incorporates Morse index information. 
Our analysis relies on a Hardy inequality in $H^1(\mathbb{R}_+^N)$, as well as a Pohozaev identity involving the Hardy potential on $\mathbb{R}_+^N$.  This work provides a variational framework for investigating the existence of normalized solutions to the Hardy type system within a half-space, and our approach is flexible, allowing it to be adapted to handle more general nonlinearities.
\end{abstract}
\begin{keyword}
Normalized solutions; Nonlinear Schr\"{o}dinger equation; Half-space; Neumann boundary condition; Hardy potential.\\
{\noindent\text{\emph{Mathematics Subject Classification:}} 35A15, 35J20, 35J60, 35Q55.}
\end{keyword}

\end{frontmatter}

\section{Introduction}
\noindent

This paper is devoted to investigating the following  Neumann problem involving Hardy term:
\begin{eqnarray}\label{start}
\begin{cases}
	&-\Delta u-\frac{\mu}{|x|^2}u +\lambda u =|u|^{q-2}u+|u|^{p-2}u ~~~\mbox{in}~~\mathbb{R}_+^N,~N\ge3,\\
    &\frac{\partial u}{\partial \nu}=0 ~~ \mbox{on}~~ \partial\mathbb{R}_+^N
\end{cases}
\end{eqnarray}
with a mass constraint
\begin{equation}\label{masscon}
	    \int_{\mathbb{R}_+^N}|u|^2 dx=a,
\end{equation}
where $\mathbb{R}_+^N$ is a half-space, $\nu$ stands for the unit outer normal to $\partial \mathbb{R}_+^N$, $0<\mu<\bar{\mu}=\left(\frac{N-2}{2} \right)^2$, $2<q<2_*<p<2^*$, $2_*=2+\frac{4}{N}$, $2^*=\frac{2N}{N-2}$, $a>0$ is prescribed, $\lambda$ appears as a Lagrange multiplier.

The study of \eqref{start} is motivated by the pursuit of standing wave solutions to the following time-dependent nonlinear Schr\"{o}dinger equation (NLS):
\begin{equation}\label{tNLS}
	i\partial_t \psi +\Delta \psi + \frac{\mu}{|x|^2}\psi + g(|\psi|)\psi =0,~~(t, x)\in \mathbb{R}\times \Omega,
\end{equation}
where $\Omega$ is a domain in $\mathbb{R}^N$. In fact, by assuming $\psi(t, x) = e^{-i\lambda t}u(x)$ with $\lambda \in \mathbb{R}$ and  $u:\Omega\to \mathbb{R}$,
 the function $u(x)$ satisfies the stationary Schr\"{o}dinger equation
\begin{equation}\label{xianzai}
		-\Delta u+\lambda u -\frac{\mu}{|x|^2} u = f(u)~~\mbox{in}~~ \Omega,\\
\end{equation}
where $f(u)=g(|u|)u$.
 From a mathematical standpoint, the inverse square potential $\frac{\mu}{|x|^2}$ is of critical importance due to it possesses the same homogeneity as the Laplacian and does not belong to Kato's class. Therefore, it cannot be regarded as a lower-order perturbation term. Moreover, any nontrivial solution of Equ. \eqref{start} exhibits a singularity at $x=0$ when $\mu\not = 0$.
From a physical perspective, the nonlinear Schr\"{o}dinger equation with Hardy potential emerges in various physical contexts, such as nonrelativistic quantum mechanics, molecular physics, quantum cosmology, and the linearization of combustion models. 

When investigating solutions to \eqref{xianzai}, a common approach involves considering a fixed $\lambda\in \mathbb{R}$. In this context, many researchers have explored the existence of positive solutions, sign-changing solutions, and ground states using variational methods, subject to suitable assumptions on $\mu$ and $f$. When $\Omega$ is bounded, comprehensive studies are available in \cite{CaoYan, Ghoussoub-2017, Ghoussoub-2017-CVPDE, RuizWillem}. For the case where $\Omega=\mathbb{R}^N$, interested readers can refer to \cite{Felli, LLT2021, Smets, Terracini} and the references therein.

Another way to find the nontrivial solutions for \eqref{xianzai} is to search for solutions with prescribed mass, namely normalized solutions,
where $\lambda\in \mathbb{R}$ is unknown. 
This approach appears particularly significant due to the conservation of mass.
  When $\Omega=\mathbb{R}^N$ and $\mu=0$, the equation \eqref{xianzai}  with prescribed mass reduces to
\begin{eqnarray}\label{take}
\begin{cases}
	&	-\Delta u+\lambda u =f(u)~~\mbox{in}~~ \mathbb{R}^N,\\
    &\int_{\mathbb{R}^N}|u|^2 dx=a.
\end{cases}
\end{eqnarray}
Consider the energy functional $\Psi: H^1(\mathbb{R}^N)\to \mathbb{R}$ given by
\[\Psi(u)=\frac{1}{2}\int_{\mathbb{R}^N}|\nabla u|^2 dx-\int_{\mathbb{R}^N}F(u)dx, \]
where $F(u)=\int_0^u f(s) ds$. 
Set $\mathcal{S}_a=\left \{u\in H^1(\mathbb{R}^N):~\int_{\mathbb{R}^N}|u|^2=a \right \}$.
Clearly, normalized solutions of \eqref{take} correspond to the critical points of $\Psi|_{\mathcal{S}_a}$.
When the nonlinearity $f$ satisfies $L^2$-subcritical growth at infinity, meaning $f(u)$ behaves as $|u|^{p-2}u$ as $|u|\to+\infty$, where $p\in (2,2_*)$, Cazenave and Lions \cite{Cazenave-Lions} developed a concentration compactness argument to establish the existence of normalized ground state solutions of (\ref{take}), which serve as global minimizers of $\Psi$ on the constraint set $\mathcal{S}_a$. For further related results, please refer to \cite{Stuart,Shibata,Hirata-Tanaka}. If the range of $p$ falls within $(2_*,2^*)$, indicating that $f$ is $L^2$-supercritical at infinity, in this scenario, the functional $\Psi$ becomes unbounded from below on $\mathcal{S}_a$, rendering the minimization method on $\mathcal{S}_a$ ineffective. The pioneering work in this direction was carried out by Jeanjean \cite{Jeanjean1997}, who introduced a scaled functional $\tilde{\Psi}(u,t):=\Psi(t\star u)$ with $t\star u(\cdot)=t^{\frac{N}{2}}u(t\cdot)$.
It was proved that $\tilde{\Psi}$ satisfies the mountain pass geometry on $\mathcal{S}_a\times \mathbb{R}$.
Moreover, using the corresponding Pohozaev identity, he obtained a bounded (PS) sequence $\{u_n\}$ of $\Psi|_{\mathcal{S}_a}$ and thus a normalized solution of (\ref{take}).
In 2017, Bartsch and Soave \cite{Bartsch2017} introduced a minimax approach on the intersection $\mathcal{S}_a\cap \mathcal{M}$
 to investigate the $L^2$-supercritical NLS equations and systems on $\mathbb{R}^N$, where 
\[\mathcal{M}:=\left \{u\in H^1(\mathbb{R}^N):~\int_{\mathbb{R}^N}|\nabla u|^2 dx=\frac{N}{2}\int_{\mathbb{R}^N}\left(f(u)u-2F(u)\right) dx\right \}\]
 is the corresponding Pohozaev manifold.
The core of their proof lies in demonstrating that the manifold $\mathcal{M}$ is a natural constraint.
Later, building on this natural constraint argument, Bieganowski and Mederski \cite{Bieganowski} presented a constrained minimization method for problem \eqref{take}. They initially sought minimizers for $\Psi$ on $\mathcal{D}_a\cap \mathcal{M}$, where 
 \[\mathcal{D}_a:= \left \{u\in H^1(\mathbb{R}^N):~\int_{\mathbb{R}^N}u^2\leq a\right\},\]
and subsequently they showed that the minimizers stay on $\mathcal{S}_a\cap \mathcal{M}$.
For more related results, we refer the readers to \cite{IkNo,Jeanjean-Lu,AJM2022,CLY-2023} and the references therein.
 
In recent years, the study of normalized solutions for the nonlinear Schr\"odinger equation with combined power nonlinearities has attracted much attention.
 Soave \cite{Soave-2020JDE} employed the Pohozaev manifold decomposition to explore the existence, non-existence, multiplicity and orbital stability of normalized solutions for \eqref{take} with nonlinearity of the form $f(u)=\kappa |u|^{q-2}u+|u|^{p-2}u$, where $\kappa \in \mathbb{R}$ and $2<q\leq 2_* \leq p < 2^*$ ($p\neq q$).
 In particular, when $2<q< 2_* < p < 2^*$,
 Soave proved that \eqref{take} admits a normalized ground state solution, which serves as a local minimizer of $\Psi$, as well as a mountain pass type normalized solution. 
For further insights into the Sobolev critical Schr\"{o}dinger equation with combined nonlinearities, we refer readers to \cite{Soave-2020JFA, JJLV2022, JL2022, WW-2022}.

Significant efforts have also been made in studying normalized solutions for nonlinear Schr\"odinger equations with potentials. In 2021,
 Bartsch et al. \cite{BMRM2021} studied the existence of normalized solutions for the following equation:
\begin{eqnarray}\label{heihei}
\begin{cases}
	&	-\Delta u+ \lambda u +V(x)u=|u|^{p-2}u~~\mbox{in}~~ \mathbb{R}^N,\\
    &\int_{\mathbb{R}^N}|u|^2 dx=a,
\end{cases}
\end{eqnarray}
where $p\in (2_*,2^*)$.
They developed a new linking argument and obtained the existence of normalized solutions with high Morse index for positive vanishing potentials $V(x)\geq 0$. 
For the case of negative vanishing potentials $V(x)\leq 0$,
Molle et al. \cite{Molle} employed mountain pass arguments to establish the existence of normalized solutions for \eqref{heihei} under explicit smallness conditions on $V$. 
Subsequently, Liu and Zhao \cite{LZ-2024} obtained the existence of normalized ground state solutions to \eqref{heihei} using the constraint minimization method proposed in \cite{Bieganowski}, for the scenario where $\lim\limits_{|x|\to \infty}V(x)=V_\infty <+\infty$ including singular potentials  under general nonlinearities.
For more studies on \eqref{heihei}, see \cite{AW-2019,IM-2020, PPVV-2021,DZ-2022,BQZ-2024,FLT2024} and their references.

In a recent result \cite{LZ2023}, Li and Zou obtained the existence and properties of normalized solutions for the following equation involving Hardy potential:
\begin{eqnarray*}
\begin{cases}
	&	-\Delta u-\frac{\mu}{|x|^2}u+\lambda u= |u|^{2^*-2}u+\vartheta |u|^{p-2}u~~\mbox{in}~~ \mathbb{R}^N,\\
    &\int_{\mathbb{R}^N}|u|^2 dx=a.
\end{cases}
\end{eqnarray*}
For an $L^2$-subcritical, $L^2$-critical, or $L^2$-supercritical perturbation $\vartheta |u|^{p-2}u$, they established several existence results for normalized ground state when $\vartheta\geq 0$ and non-existence results when $\vartheta\leq 0$. Furthermore, they also considered the asymptotic behavior of normalized solutions as $\mu \to 0$ or $\vartheta\to 0$.

While the NLS equation has been extensively studied in the whole space scenario, much less is known about its behavior in half-space. In 1997,
 Cerami and Passaseo \cite{Cerami-half} studied the following problem concerning the NLS equation in $\mathbb{R}^N_+$:
 \begin{eqnarray}\label{lkzv}
\begin{cases}
	-\Delta u+V(x) u =u^{2^*-1} &\mbox{in} ~ \mathbb{R}_+^N,\\
	u>0 &\mbox{in} ~ \mathbb{R}_+^N,\\
    \frac{\partial u}{\partial \nu}=0 & \mbox{on}~\partial \mathbb{R}_+^N,\\
    u(x)\to 0 & \mbox{as}~|x|\to \infty.
\end{cases}
\end{eqnarray}
They showed that if $V$ is a non-negative function that is strictly positive somewhere, and its $\|V\|_{L^{N/2}(\mathbb{R}_+^N)}$ norm satisfies $\|V\|_{L^{N/2}(\mathbb{R}_+^N)}<S(1-2^{-\frac{2}{N}})$, where $S$ is the best Sobolev constant, then \eqref{lkzv} has a positive solution. Additionally, they provided conditions on
$V$ to ensure the existence of multiple positive solutions.
In their proof,
Cerami and Passaseo employed the reflection technique and the concentration compactness principle as key tools to address the compactness issue. Subsequently,
Alves \cite{Alves-2009}
studied the existence of positive solutions for a class of $p$-Laplacian Schr\"{o}dinger equations with Neumann boundary conditions in half-space. More recently, Fern\'andez and Weth \cite{Weth-2022} analyzed the existence, non-existence and multiplicity of bounded positive solutions to a Dirichlet boundary problem involving NLS equations in half-spaces.

Inspired by the aforementioned studies, a natural question arises: can we find normalized solutions for equations like \eqref{start},  which involves both the Hardy potential and combined nonlinearities, in a half-space?
Unlike the problem in the whole space $\mathbb{R}^N$, Equ. \eqref{start} is not invariant under translations.
 Furthermore,
owing to the mass constraint,  the reflection technique, which is typically used in half-space problems (see \cite{Cerami-half}) to compare the values of the energy functional with those of its limiting counterpart, proves ineffective in our scenario. Additionally, the presence of the Hardy potential introduces a singularity at zero, further complicating the situation.

In this paper, we establish a suitable framework to explore the existence of normalized solutions for \eqref{start}. We begin by defining the Hilbert space
 $H_{+,\mu}:= H^1(\mathbb{R}_+^N)$ for $\mu\in [0, \bar{\mu})$
with the scalar product
$$\langle u,v \rangle_{H_{+,\mu}}= \int_{\mathbb{R}_+^N}\left[\nabla u \nabla v -\frac{\mu}{|x|^2}uv + uv \right]dx
$$ and the norm
$\|u\|_{H_{+,\mu}}=\sqrt{\langle u,u \rangle_{H_{+,\mu}}}$. In \cite{LZ2023}, it was shown that, by the classical Hardy inequality, the norm $\|\cdot\|_{H_\mu}$ in $\mathbb{R}^N$ involving the Hardy potential is equivalent to $\|\cdot\|_{H^1(\mathbb{R}^N)}$. However, for the half-space problem with Neumann boundary conditions, such a Hardy-type inequality is unavailable.  To overcome this obstacle,  we first establish an appropriate Hardy inequality in $H^1(\mathbb{R}^N_+)$.
Specifically, we prove that there exists a constant $\delta>0$ (independent of $u$) such that for $\mu>0$ small enough, the following inequality holds for all $u\in H^1(\mathbb{R}_+^N)\setminus\{0\}$:
\[0<\min\{1-\frac{\mu}{\bar{\mu}}-\delta \mu, 1-\mu C(\delta) \}\int_{\mathbb{R}_+^N} \left(|\nabla u|^2 + u^2 \right)dx\leq \int_{\mathbb{R}_+^N}\left( |\nabla u|^2- \frac{\mu}{|x|^2}u^2 + u^2 \right)dx,\]
where $C(\delta)$ is a constant defined in Lemma \ref{hardy}. This result implies that $\|\cdot\|_{H_{+,\mu}}$ is equivalent to the standard norm $\|\cdot\|_{H^1(\mathbb{R}^N_+)}$, allowing us to proceed with our analysis. We denote the space of radial functions by $H_{r, \mu}:=H_{rad}^1(\mathbb{R}_+^N)$, which is equipped with the same scalar product and norm as those in $H_{\mu}$.

Next, we introduce the energy functional $J: H_{+,\mu}\to \mathbb{R}$ defined by
 $$
J(u)=\frac{1}{2}\int_{\mathbb{R}_+^N}|\nabla u|^2 dx-
\frac{1}{2}\int_{\mathbb{R}_+^N}\frac{\mu}{|x|^2}u^2 dx- \frac{1}{q}\int_{\mathbb{R}_+^N}|u|^q dx-\frac{1}{p}\int_{\mathbb{R}_+^N}|u|^p dx
$$
and the constraint $\mathcal{S}_{+,a}$ by
$$
\mathcal{S}_{+,a}:=\left \{u\in H_{+,\mu}:~\int_{\mathbb{R}_+^N}|u|^2 dx=a \right \}.
$$
We also define the subset of radial functions in $\mathcal{S}_{+,a}$ by $\mathcal{S}_{r,a}=\mathcal{S}_{+,a} \cap H_{r,\mu}$.

Our main theorem is as follows.
  \begin{theorem}\label{mainhalf}
  	Let $N\geq 3$, $q\in (2, 2_*)$, $p\in (2, 2^*)$.
  	There exists a positive constant $a_*>0$ such that for any $a\in (0, a_*)$, there exists  $\mu_a\in (0, \frac{\bar{\mu}}{4})$ 
  	such that when $\mu\in (0, \mu_a)$, problem \eqref{start} admits two normalized solutions,  one of which is a local minimizer of $J|_{\mathcal{S}_{+,a}}$, and another one is of mountain pass type.
    \end{theorem}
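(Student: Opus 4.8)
The plan is to carry out the whole argument on the radial constraint $\mathcal{S}_{r,a}=\mathcal{S}_{+,a}\cap H_{r,\mu}$, where the embedding $H_{r,\mu}\hookrightarrow L^s(\mathbb{R}_+^N)$ is compact for every $s\in(2,2^*)$; by even reflection across $\partial\mathbb{R}_+^N$ (which is compatible with the Neumann condition) together with the principle of symmetric criticality, critical points of $J|_{\mathcal{S}_{r,a}}$ are genuine weak solutions of \eqref{start}. The basic tool is the mass-preserving scaling $t\star u(x)=t^{N/2}u(tx)$, under which the Hardy quadratic form scales like $t^2$ (by the homogeneity that also underlies Lemma \ref{hardy}) while the $L^q$ and $L^p$ terms scale as $t^{\gamma_q}$ and $t^{\gamma_p}$ with $\gamma_q=\tfrac{N(q-2)}{2}<2<\gamma_p=\tfrac{N(p-2)}{2}$. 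Hence for fixed $u\in\mathcal{S}_{r,a}$ the fiber map
$$\Phi_u(t):=J(t\star u)=\frac{t^2}{2}\!\int_{\mathbb{R}_+^N}\!\Big(|\nabla u|^2-\tfrac{\mu}{|x|^2}u^2\Big)dx-\frac{t^{\gamma_q}}{q}\!\int_{\mathbb{R}_+^N}\!|u|^q dx-\frac{t^{\gamma_p}}{p}\!\int_{\mathbb{R}_+^N}\!|u|^p dx$$
is negative for small $t$, tends to $-\infty$ as $t\to\infty$, and — bounding the two nonlinear integrals by the Gagliardo–Nirenberg inequality — develops, for $a$ and $\mu$ small, a strictly positive interior maximum separating a negative-energy region near $t=0$ from the escape region. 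This is exactly the two–critical–point ($t_{\min}<t_{\max}$) structure of Soave, and it pins down both candidate solutions.

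\textbf{Step 1 (local minimizer).} Fix $\rho>0$ so that $B_\rho:=\{u\in\mathcal{S}_{r,a}:\,\|u\|_{H_{+,\mu}}^2-a\le\rho\}$ lies inside the negative well; using the Gagliardo–Nirenberg inequality and Lemma \ref{hardy} I would produce $a_*>0$ and, for each $a<a_*$, a threshold $\mu_a\in(0,\bar{\mu}/4)$ such that for $\mu<\mu_a$ one has $m(a):=\inf_{B_\rho}J<0\le\inf_{\partial B_\rho}J$. Ekeland's variational principle on the complete set $\overline{B_\rho}$ then produces a Palais–Smale sequence $\{u_n\}\subset B_\rho$ for $J|_{\mathcal{S}_{r,a}}$ at level $m(a)$, with Lagrange multipliers $\lambda_n$. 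Boundedness in $H_{r,\mu}$ is automatic from the confinement to $B_\rho$; I would extract a weak limit $u$, use radial compactness to pass to the limit in $\int|u_n|^q$ and $\int|u_n|^p$, and check $u\ne0$ (otherwise these terms vanish, whence $\liminf_n J(u_n)\ge0$, contradicting $m(a)<0$). Testing the equation and combining it with the Pohozaev identity on $\mathbb{R}_+^N$ gives $\lambda_n\to\lambda>0$, and the positivity of $\lambda$ upgrades weak to strong convergence in $H_{r,\mu}$, so that $u$ attains $m(a)$ and solves \eqref{start}.

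\textbf{Step 2 (mountain–pass solution).} For the second solution I would run the parameterized minimax scheme on the augmented functional $\tilde J(u,s):=J(s\star u)$ over $\mathcal{S}_{r,a}\times\mathbb{R}$, whose mountain–pass geometry is furnished by the interior maximum of $\Phi_u$ found above and whose mountain–pass level satisfies $c(a)>\max\{0,m(a)\}$. The parameterized minimax principle incorporating Morse–index information (in the spirit of Bartsch–Soave) yields a Palais–Smale sequence $\{v_n\}$ for $J|_{\mathcal{S}_{r,a}}$ at level $c(a)$ that in addition asymptotically satisfies the Pohozaev identity and carries a bound on its Morse index. The Pohozaev identity controls $\|v_n\|_{H_{+,\mu}}$ from above, giving boundedness even though $p$ is $L^2$-supercritical; radial compactness again passes the nonlinear terms to the limit, while the approximate Pohozaev relation and the Morse–index constraint force the weak limit $v$ to be nontrivial with Lagrange multiplier $\lambda>0$, so that strong convergence follows as in Step 1. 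Since $c(a)>m(a)$, $v$ is distinct from the local minimizer, completing the proof.

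\textbf{Main obstacle.} The crux is compactness of the mountain–pass sequence: because $p>2_*$ the level $c(a)$ is positive and $J$ is unbounded below, so one must simultaneously rule out vanishing (handled by the radial framework), prevent the multiplier from degenerating to $\lambda\le0$ (handled by the Pohozaev identity together with the Morse–index bound), and keep $c(a)$ below the threshold at which the Hardy singularity at the origin or escape to infinity could absorb mass. It is precisely to secure these inequalities that the smallness of both $a$ and $\mu$, through the coupled thresholds $a_*$ and $\mu_a$, is required, and establishing these quantitative energy and multiplier estimates uniformly in the relevant parameters is the technical heart of the argument.
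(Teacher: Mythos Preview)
Your Step 1 is essentially the paper's argument (Lemma \ref{exist}): Ekeland on a gradient-ball inside the constraint, after Schwarz rearrangement into $H_{r,\mu}$. One minor difference worth noting: the paper gets $\lambda_*>0$ by the one-line estimate $2m_a+o(1)=2J(u_n)-J'(u_n)[u_n]-\lambda_na\ge-\lambda_na$, not via Pohozaev; this gives $\lambda_*\ge-2m_a/a$ directly and fixes $\mu_a$ through $\mu C_{\bar\mu}\le -m_a$.

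Step 2 is where you and the paper genuinely diverge, and your description contains a methodological confusion. You invoke Jeanjean's stretched functional $\tilde J(u,s)=J(s\star u)$, which does yield a PS sequence with an \emph{approximate Pohozaev relation}, but it does \emph{not} produce Morse-index information---and neither does the Bartsch--Soave natural-constraint method you cite. Morse-index control comes from the separate abstract principle of Borthwick--Chang--Jeanjean--Soave (Theorem \ref{abstract theorem} here), which is built on the different parameterisation $J_\rho=A-\rho B$, $\rho\in[\tfrac12,1]$. The paper follows exactly that route: for a.e.\ $\rho$ the monotonicity trick gives a \emph{bounded} PS sequence with $\tilde m_{\zeta_n}(u_n)\le1$; Lemmas \ref{thank} and \ref{goo} turn the index bound into $\liminf\lambda_n\ge0$; this nonnegativity is what forces $u_\rho\not\equiv0$, after which Lemma \ref{poho} yields $\lambda_\rho>c_0>0$ with $c_0$ independent of $\mu$. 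A second limit $\rho_n\to1^-$ (with boundedness now coming from the genuine Pohozaev identity on the solutions $u_{\rho_n}$) completes the proof.

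Your Jeanjean route is in fact salvageable \emph{without} any Morse-index input: the approximate Pohozaev relation together with $J(v_n)\to c\ge a/2$ already gives a $\mu$-independent lower bound on $\|v_n\|_p^p$, hence nontriviality of the weak limit by radial compactness and, via Lemma \ref{poho}, $\lambda>c_0>0$ uniformly in $\mu$. So the defect is a mislabelled ingredient rather than a fatal gap: drop the Morse-index claim and your outline becomes a legitimate alternative to the paper's scheme. What the paper's route buys is that boundedness of the PS sequence comes for free from monotonicity in $\rho$ rather than from Pohozaev algebra, and the Morse-index argument for $\lambda\ge0$ is designed to be reusable for nonlinearities where that algebra is less cooperative.
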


To find the first normalized solution, we focus on the minimization problem  
$$
m_a=\inf\limits_{u\in A_{k,a}}J(u),
$$ 
where $A_{k,a}=\{u\in \mathcal{S}_{+,a}: \|\nabla u\|_2^2
\leq k\}$ for some appropriately chosen $k>0$.  We consider a minimizing sequence $\{u_n\}\subset A_{k,a}$ of $J$. By utilizing the decreasing rearrangement and 
Ekeland's variational principle, we get a new minimizing sequence, still denoted by $\{u_n\}$, within $A_{k,a} \cap H_{r,\mu}$, which also constitutes a Palais-Smale (PS) sequence for $J$. By analyzing the relationship between $\{\lambda_n\}$ and $m_a$, we can establish the strong convergence of $\{u_n\}$ in $H_{r,\mu}$.

For the second normalized solution, which features mountain pass characteristics, the main challenge lies in establishing a bounded Palais-Smale (PS) sequence. Traditionally, constructing a (PS) sequence for combined nonlinearities diverges from the arguments used for purely $L^2$-supercritical problems, as seen in \cite{Jeanjean1997,Jeanjean-Lu}. Instead, one typically adopts the approach outlined in \cite{Soave-2020JDE,Soave-2020JFA}, which employs Ghoussoub's minimax method in conjunction with the decomposition of Pohozaev manifolds, as exemplified in \cite{JL2022,LZ2023}. Moreover, the sign of the Lagrangian multiplier is crucial in compactness arguments, and it has been commonly established as positive through the use of the Pohozaev identity in prior studies.

In our work, drawing inspiration from \cite{BCJS2023Nolinearity,CJS-2024}, we introduce a novel approach that combines a parameterized minimax principle with Morse index information to secure a bounded (PS) sequence. Notably, our analysis of the Morse index of the (PS) sequence demonstrates that the Lagrangian multiplier is nonnegative before proving it to be positive, a step that was challenging with previous methodologies. It's important to note that our method necessitates only the presence of mountain pass geometry on the $L^2$-sphere, rendering our approach more straightforward and universally applicable to the study of $L^2$-supercritical problems with a variety of nonlinearities.

 Furthermore, the compactness of the sequence $\{u_n\}$ is more involved than in previous arguments. More precisely,
using the above Hardy inequality in $H^1(\mathbb{R}^N_+)$, we can only ensures that 
\begin{eqnarray*}
\int_{\mathbb{R}_+^N}\frac{1}{|x|^2}u^2 dx~~\mbox{is bounded by}~~~ \int_{\mathbb{R}_+^N}\left[\left(\frac{1}{\bar{\mu}}+\delta\right) |\nabla u|^2 + C(\delta) u^2\right] dx.
\end{eqnarray*}
Given the presence of a sequence of bounded Lagrange multipliers $\{\lambda_n\}\subset \mathbb{R}$, denoting $\lambda_{0}=\lim\limits_{n\to+\infty}\lambda_n$, it is crucial to rule out the possibility that $\lambda_0>0$ is small as $\mu>0$ is sufficiently small. To address this issue, we utilize an auxiliary function $h(t)$ to demonstrate that the local minimum geometry of $J$ holds for any $a\in (0, a_*)$, where $a_*$ is small and independent of $\mu$. Consequently, together with the nonnegative of $\lambda_n$ and a Pohozaev identity involving the Hardy potential on $\mathbb{R}_+^N$ established in Section 2, we establish a uniform lower bound away from zero for $\{\lambda_n\}$, which in turn enables us to recover the compactness of $\{u_n\}$ for small $\mu$.

The paper is organized as follows. In Section 2, we establish both a Hardy inequality in $H_{+,\mu}$ as well as a Pohozaev identity in $\mathbb{R}_+^N$, and discuss the properties of the auxiliary function $h(t)$.
In Section 3, we demonstrate the existence of a local minimizer of $J_\rho$ on $\mathcal{S}_{+,a}$.  
Section 4 begins by proving the existence of a mountain pass critical point $u_\rho$ with Morse index information for the parameterized functional $J_\rho$ on the set $S_{r,a}$ for almost every $\rho\in[\frac{1}{2},1]$. Subsequently, we analyze the original problem and complete the proof of Theorem \ref{mainhalf}. For notational convenience, we denote the norm of the space $L^r(\mathbb{R}_+^N)(1\le r\le +\infty)$ by $\|\cdot\|_r$, and the norm in $H_{+,\mu}$, as well as $H_{r,\mu}$, by $\|\cdot\|$. Additionally, $B_r(0)$ represents a ball in $\mathbb{R}^N$ centered at the origin with radius $r>0$.


\section{Preliminaries}\label{constant}
\noindent

We recall the following Gagliardo-Nirenberg inequality \cite[Theorem 5.8]{Adams}:
for every $N\geq 1$ and $p\in(2,2^*)$, there exists a constant $C_{N,p}$ depending on $N$
and $p$ such that 
$$\|u\|_p\leq C_{N,p}\|u\|_2^{1-\gamma_p}\|u\|^{\gamma_p},$$
where 
$\gamma_p:=\frac{N(p-2)}{2p}$.

The classical Hardy inequality (refer to \cite[Corollary 2.2]{AA-1998}) states that, for a unit ball $B_1(0)\subset \mathbb{R}^N$, where $N\geq 3$, if $u\in H^1_0(B_1(0))$, then $\frac{u}{|x|}\in L^2(B_1(0))$ and 
\begin{equation}\label{eqqqq}
	\int_{B_1(0)} \frac{u^2}{|x|^2} dx\leq \frac{1}{\bar{\mu}}\int_{B_1(0)} |\nabla u|^2 dx.
\end{equation}
However, this inequality is only valid for the Dirichlet boundary case.
Motivated by \cite{Chabrowski,Han}, we build a Hardy inequality on $H_{+,\mu}$ as follows.

\begin{lemma}\label{hardy}
	For every $\delta>0$ there exists a constant $C=C(\delta)>0$ such that 
	\[\int_{\mathbb{R}_+^N}\frac{1}{|x|^2}u^2 \leq (\frac{1}{\bar{\mu}}+\delta) \int_{\mathbb{R}_+^N}|\nabla u|^2 + C(\delta) \int_{\mathbb{R}_+^N}u^2 \]
	for every $u\in H_{+,\mu}$.
\end{lemma}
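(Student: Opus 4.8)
The plan is to reduce the half-space estimate to the already-available Dirichlet ball inequality \eqref{eqqqq} by means of an even reflection across $\partial\mathbb{R}_+^N$, which converts the Neumann (non-vanishing) boundary behaviour into an interior situation. Writing $x=(x',x_N)$, I would introduce the even extension $\tilde u(x',x_N):=u(x',|x_N|)$. Since $u\in H^1(\mathbb{R}_+^N)$, the extension $\tilde u$ lies in $H^1(\mathbb{R}^N)$ (the even reflection produces no singular contribution along $\{x_N=0\}$), and because $|x|$ is symmetric in $x_N$ one has the doubling identities
\[\int_{\mathbb{R}^N}|\nabla\tilde u|^2\,dx=2\int_{\mathbb{R}_+^N}|\nabla u|^2\,dx,\qquad \int_{\mathbb{R}^N}\frac{\tilde u^2}{|x|^2}\,dx=2\int_{\mathbb{R}_+^N}\frac{u^2}{|x|^2}\,dx,\qquad \int_{\mathbb{R}^N}\tilde u^2\,dx=2\int_{\mathbb{R}_+^N}u^2\,dx.\]
It therefore suffices to bound $\int_{\mathbb{R}^N}\tilde u^2/|x|^2$, after which dividing by $2$ returns the claim on $\mathbb{R}_+^N$. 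The essential gain is that the singular point $0$ is now an interior point, so a cutoff of $\tilde u$ supported near $0$ belongs to $H^1_0$ of a ball and \eqref{eqqqq} becomes applicable.

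Next I would fix a radial cutoff $\eta\in C_c^\infty(B_1(0))$ with $0\le\eta\le1$ and $\eta\equiv1$ on $B_{1/2}(0)$, and split
\[\int_{\mathbb{R}^N}\frac{\tilde u^2}{|x|^2}=\int_{B_{1/2}(0)}\frac{\tilde u^2}{|x|^2}+\int_{\mathbb{R}^N\setminus B_{1/2}(0)}\frac{\tilde u^2}{|x|^2}.\]
On the outer region $|x|^{-2}\le4$, so the second integral is at most $4\|\tilde u\|_{L^2(\mathbb{R}^N)}^2$. On $B_{1/2}(0)$ we have $\eta\equiv1$, hence the first integral is bounded by $\int_{B_1(0)}(\eta\tilde u)^2/|x|^2$; since $\eta\tilde u\in H^1_0(B_1(0))$, inequality \eqref{eqqqq} gives $\int_{B_1(0)}(\eta\tilde u)^2/|x|^2\le\tfrac{1}{\bar\mu}\int_{B_1(0)}|\nabla(\eta\tilde u)|^2$.

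It then remains to expand $|\nabla(\eta\tilde u)|^2=\eta^2|\nabla\tilde u|^2+2\eta\tilde u\,\nabla\eta\cdot\nabla\tilde u+\tilde u^2|\nabla\eta|^2$ and to absorb the cross term by Young's inequality: for any $\epsilon>0$, $2\eta\tilde u\,\nabla\eta\cdot\nabla\tilde u\le\epsilon\,\eta^2|\nabla\tilde u|^2+\epsilon^{-1}\tilde u^2|\nabla\eta|^2$. Using $0\le\eta\le1$ and $|\nabla\eta|\le C_\eta$ yields $\int_{B_1(0)}|\nabla(\eta\tilde u)|^2\le(1+\epsilon)\int_{\mathbb{R}^N}|\nabla\tilde u|^2+(1+\epsilon^{-1})C_\eta^2\|\tilde u\|_{L^2(\mathbb{R}^N)}^2$. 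Collecting the two regions and choosing $\epsilon=\bar\mu\delta$ so that $(1+\epsilon)/\bar\mu=\tfrac1{\bar\mu}+\delta$, I obtain a bound of the form $\int_{\mathbb{R}^N}\tilde u^2/|x|^2\le(\tfrac1{\bar\mu}+\delta)\int_{\mathbb{R}^N}|\nabla\tilde u|^2+C(\delta)\|\tilde u\|_{L^2(\mathbb{R}^N)}^2$; dividing by $2$ and invoking the reflection identities then delivers the stated inequality, with $C(\delta)=4+\bar\mu^{-1}(1+(\bar\mu\delta)^{-1})C_\eta^2$.

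The main obstacle is precisely the first step: under the Neumann condition $u$ need not vanish on $\partial\mathbb{R}_+^N$, so the Dirichlet inequality \eqref{eqqqq} cannot be applied to any cutoff of $u$ on a half-ball. The even reflection is what circumvents this, and the only point requiring genuine care is verifying that $\tilde u\in H^1(\mathbb{R}^N)$ together with the clean doubling identities — no distributional mass appears on $\{x_N=0\}$ because the even extension is continuous there, so the weak gradient is just the reflected gradient. Everything afterwards is the standard localized-Hardy computation, and the loss of the sharp constant (the appearance of $\delta$ and $C(\delta)$ rather than a clean $\tfrac1{\bar\mu}$) is exactly the price of the cutoff, which is all that is needed for the subsequent norm-equivalence argument.
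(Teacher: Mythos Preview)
Your proposal is correct and follows essentially the same route as the paper: even reflection of $u$ across $\{x_N=0\}$ to obtain $\tilde u\in H^1(\mathbb{R}^N)$, a smooth cutoff supported in $B_1(0)$ so that the localized function lies in $H^1_0(B_1(0))$ and the Dirichlet Hardy inequality \eqref{eqqqq} applies, expansion of $|\nabla(\eta\tilde u)|^2$ with Young's inequality to produce the $\delta$--loss, and the trivial bound $|x|^{-2}\le 4$ away from $B_{1/2}(0)$. The only cosmetic difference is the order of operations (the paper cuts off on the half-space and then reflects, you reflect first and then cut off), and the paper tracks an explicit constant $C(\delta)=\tfrac{16}{\bar\mu}+\tfrac{64}{\bar\mu^2\delta}+4$ by fixing $|\nabla\phi|\le 4$.
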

\begin{proof}
Set $B_{r}^+(0)= B_r(0)\cap \mathbb{R}_+^N$.
We define a $C^1$ cut-off function $\phi(x)$ such that $|\phi(x)|\leq 1$ for all $ x\in B_1(0)$, and
\begin{equation*}
	\phi(x)=\begin{cases}
1 ~~\mbox{for}~~x\in B_\frac{1}{2}(0),\\
0 ~~\mbox{for}~~x\in \mathbb{R}^N\backslash B_1(0).
	\end{cases}
\end{equation*}
Moreover, we can assume that $\phi(x)$ is even with respect to $x_ N$ and that $\phi$ satisfies 
$|\nabla \phi(x)|\leq 4$ for $x\in B_1(0)\backslash B_\frac{1}{2}(0)$. Then 
\begin{equation*}\label{9-30-1}
	\int_{\mathbb{R}^N_+} \frac{u^2}{|x|^2} dx = \int_{B_1^+(0)} \frac{\phi^2 u^2}{|x|^2} dx + \int_{\mathbb{R}^N_+} \frac{u^2(1-\phi^2)}{|x|^2} dx.
\end{equation*}

Next, we extend $u$ by reflection across the plane $\{x_N=0\}$. 
      Specifically, for $(x',x_N)\in \mathbb{R}^N$, we define
        \begin{equation*}
        	\tilde{u}(x',x_N)=
        	\begin{cases}
        		&u(x',x_N) ~{\rm if}~x_N\geq 0,\\
        		&u(x',-x_N)~{\rm if}~x_N<0.
        	\end{cases}
        \end{equation*}
We can verify that $\phi \tilde{u}\in H^1_0(B_1(0))$  and that $\phi \tilde{u}$ even in $x_N$.
By \eqref{eqqqq}, we have
\begin{align*}
	\int_{B_1^+(0)} \frac{\phi^2u^2}{|x|^2} dx= \frac{1}{2}\int_{B_1(0)} \frac{\phi^2 \tilde{u}^2}{|x|^2} dx \leq \frac{1}{2\bar{\mu}}\int_{B_1(0)} |\nabla (\phi \tilde{u})|^2 dx=\frac{1}{\bar{\mu}}\int_{\mathbb{R}_+^N}|\nabla (\phi u)|^2 dx.
\end{align*}
According to the Young's inequality, for any $\delta>0$, we get
\begin{align*}
	\frac{1}{\bar{\mu}}\int_{\mathbb{R}_+^N}|\nabla (\phi u)|^2 dx &=
	\frac{1}{\bar{\mu}}\int_{\mathbb{R}_+^N}\left[  u^2|\nabla \phi|^2 +2u\phi \nabla u\nabla\phi +\phi^2|\nabla u|^2\right] dx\\
	&\leq (\frac{1}{\bar{\mu}}+\delta)\int_{\mathbb{R}_+^N}\phi^2|\nabla u|^2 dx+ (\frac{1}{\bar{\mu}}+ \frac{4}{\bar{\mu}^2\delta})\int_{\mathbb{R}_+^N} u^2|\nabla \phi|^2 dx.
\end{align*}
Thus, we deduce that
\begin{align*}
\int_{\mathbb{R}^N_+} \frac{|u|^2}{|x|^2} dx &\leq (\frac{1}{\bar{\mu}}+\delta)\int_{\mathbb{R}_+^N}\phi^2|\nabla u|^2 dx+ (\frac{1}{\bar{\mu}}+ \frac{4}{\bar{\mu}^2\delta})\int_{\mathbb{R}_+^N} u^2|\nabla \phi|^2 dx +\int_{\mathbb{R}_+^N} \frac{u^2(1-\phi^2)}{|x|^2} dx\\
    &\leq (\frac{1}{\bar{\mu}}+\delta)\int_{\mathbb{R}_+^N}|\nabla u|^2 dx+ (\frac{16}{\bar{\mu}}+\frac{64}{\bar{\mu}^2\delta}+4)\int_{\mathbb{R}_+^N}|u|^2 dx.
\end{align*}
This completes the proof.
\end{proof}

In the following, we shall prove a Pohozaev identity in the half space $\mathbb{R}^N_+$.
\begin{lemma}\label{poho}
	Assume that $\mu\in (0,\bar{\mu})$ and $g\in C^1(\mathbb{R}, \mathbb{R})$ satisfies $g(0)=0$ and $\lim\limits_{|s|\to \infty}\frac{g(s)}{|s|^{2^*-2}s}=0$.
	Let $u\in H_{+,\mu}$ be a solution to the following boundary value problem:
\begin{eqnarray}\label{madaua}
\begin{cases}
	&-\Delta u -\frac{\mu}{|x|^2}u =g(u)~~\mbox{in} ~~\mathbb{R}_+^N,\\
    &\frac{\partial u}{\partial \nu}=0 ~~ \mbox{on}~~ \partial\mathbb{R}_+^N.
\end{cases}
\end{eqnarray}
Then, we have
	\[\frac{N-2}{2}\int_{\mathbb{R}_+^N}\left( |\nabla u|^2-\frac{\mu}{|x|^2}u^2\right) dx - N\int_{\mathbb{R}_+^N}G(u) dx=0,\]
where $G(s)=\int_{0}^s g(\tau) d\tau$.
\end{lemma}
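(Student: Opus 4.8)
The plan is to derive the identity via the classical Rellich--Pohozaev multiplier, testing \eqref{madaua} against the dilation field $x\cdot\nabla u$. Because the Hardy potential is singular at the origin and $\mathbb{R}_+^N$ is unbounded, I would not integrate over $\mathbb{R}_+^N$ directly but over a truncated half-annulus $\Omega_{\varepsilon,R}:=\mathbb{R}_+^N\cap\big(B_R(0)\setminus B_\varepsilon(0)\big)$, multiply the equation by $x\cdot\nabla u$, integrate, and then pass to the limits $\varepsilon\to0$ and $R\to+\infty$. The boundary $\partial\Omega_{\varepsilon,R}$ consists of three pieces: the flat part $\Gamma_0\subset\{x_N=0\}$, the outer cap $\Gamma_R\subset\partial B_R(0)$, and the inner cap $\Gamma_\varepsilon\subset\partial B_\varepsilon(0)$. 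The whole scheme reduces the statement to collecting bulk terms while showing every boundary integral drops out.

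The algebraic core, obtained by standard integration by parts, runs as follows. Green's identity applied to $-\int(\Delta u)(x\cdot\nabla u)$, together with the identity $\nabla u\cdot\nabla(x\cdot\nabla u)=|\nabla u|^2+\tfrac12 x\cdot\nabla(|\nabla u|^2)$, produces the bulk term $\tfrac{2-N}{2}\int|\nabla u|^2$ plus the boundary integrals $\tfrac12\int(x\cdot\nu)|\nabla u|^2$ and $-\int\tfrac{\partial u}{\partial\nu}(x\cdot\nabla u)$. For the Hardy term I would write $u(x\cdot\nabla u)=\tfrac12 x\cdot\nabla(u^2)$ and use $\nabla\cdot(x/|x|^2)=(N-2)/|x|^2$, which yields the bulk contribution $\tfrac{\mu(N-2)}{2}\int u^2/|x|^2$ plus a boundary term carrying the weight $(x\cdot\nu)/|x|^2$. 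The right-hand side gives, via $g(u)(x\cdot\nabla u)=x\cdot\nabla(G(u))$ and $\nabla\cdot x=N$, the term $-N\int G(u)$ plus $\int(x\cdot\nu)G(u)$. The decisive geometric observation is that the flat boundary is a hyperplane through the origin, so $x\cdot\nu=0$ on $\Gamma_0$; combined with the Neumann condition $\partial u/\partial\nu=0$, this annihilates every boundary integral over $\Gamma_0$. Collecting the bulk terms then gives $\tfrac{2-N}{2}\int(|\nabla u|^2-\tfrac{\mu}{|x|^2}u^2)=-N\int G(u)$, which is the claimed identity after multiplying by $-1$.

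The main obstacle is the rigorous justification rather than the algebra, namely the vanishing of the spherical contributions over $\Gamma_\varepsilon$ and $\Gamma_R$. This rests on regularity and decay: using the growth hypothesis $\lim_{|s|\to\infty} g(s)/(|s|^{2^*-2}s)=0$ to treat $g(u)$ as a subcritical right-hand side, interior and boundary elliptic regularity give $u\in C^2$ up to the flat boundary away from the origin, so that $x\cdot\nabla u$ is meaningful and the identity is legitimate on $\Omega_{\varepsilon,R}$. At infinity, since the caps carry the weight $|x|=R$, the terms are of size $R\int_{\Gamma_R}|\nabla u|^2\,dS$ and similar; because $|\nabla u|^2,\ u^2/|x|^2,\ G(u)\in L^1(\mathbb{R}_+^N)$, the function $R\mapsto\int_{\partial B_R}(\cdots)\,dS$ is integrable, forcing $R\int_{\Gamma_R}(\cdots)\to0$ along a suitable sequence $R_n\to+\infty$. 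Near the origin is the genuinely delicate part, where the Hardy singularity competes with the weight $x$: invoking the known singular behaviour $|u(x)|\lesssim|x|^{-\gamma_-}$, $|\nabla u(x)|\lesssim|x|^{-\gamma_--1}$ with $\gamma_-=\tfrac{N-2}{2}-\sqrt{\bar{\mu}-\mu}$, the inner terms are controlled by powers $\varepsilon^{\,N-2-2\gamma_-}=\varepsilon^{\,2\sqrt{\bar{\mu}-\mu}}$, which tend to $0$ precisely because $\gamma_-<\tfrac{N-2}{2}$ for $0<\mu<\bar{\mu}$. Once these three families of boundary terms are dispatched, passing to the limit completes the proof.
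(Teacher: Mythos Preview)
Your proposal follows the same Rellich--Pohozaev strategy as the paper: multiply by $x\cdot\nabla u$, integrate over a truncated region avoiding both the origin and infinity, collect the bulk terms, and show the boundary contributions vanish. The algebraic core and the treatment of the outer cap (finding a good sequence $R_n\to\infty$ from the $L^1$-integrability of $|\nabla u|^2$, $u^2/|x|^2$ and $G(u)$) coincide with the paper's.

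The one substantive difference is how the singularity at the origin is excised. You remove the half-ball $B_\varepsilon(0)\cap\mathbb{R}_+^N$, producing an inner spherical cap $\Gamma_\varepsilon$, and control it via the pointwise asymptotics $|u|\lesssim|x|^{-\gamma_-}$, $|\nabla u|\lesssim|x|^{-\gamma_--1}$. The paper instead lifts the flat boundary to height $\sigma$, working on the slab-type region $D_{\sigma,R}=\{x_N\ge\sigma,\ |x|\le R\}$, and sends $\sigma\to0^+$ using the Neumann condition together with the factor $x\cdot\nu=-\sigma$ on the raised flat piece. Your route is correct but imports an external ingredient---the near-origin asymptotics, which do hold here by even reflection to $\mathbb{R}^N$ and the whole-space theory---whereas the paper's slab truncation avoids invoking that machinery. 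Conversely, your choice keeps the flat boundary exactly on $\{x_N=0\}$, where both $x\cdot\nu=0$ and $\partial u/\partial\nu=0$ hold identically, so $\Gamma_0$ drops out cleanly rather than only in the limit. One minor correction: you claim $u\in C^2$ up to the flat boundary away from the origin; what the Brezis--Kato and $L^p$-estimate argument actually gives (and what suffices) is $u\in W^{2,p}_{\mathrm{loc}}(\mathbb{R}_+^N\setminus\{0\})$ for all $p<\infty$, hence $C^{1,\alpha}$.
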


\begin{proof}
Firstly, we can verify that, for any $R>0$, there exists $C>0$ such that
	\[\left| \frac{g(u)}{u}+\frac{\mu}{|x|^2} \right|\leq C+C|u|^{\frac{4}{N-2}}, ~~\forall x\in \mathbb{R}_+^N\backslash B_R^+(0), \]
where $B_R^+(0):=\{x\in \mathbb{R}_+^N : |x|<R \}$.
By similar arguments as in \cite{Brezis-Kato}, we deduce that
$u\in L^p_{loc}(\mathbb{R}_+^N\backslash B_R^+(0))$, for any $p\in [1, \infty)$.
Using standard $L^p$ estimate, it follows that $u\in W^{2,p}_{loc}(\mathbb{R}_+^N\backslash B_R^+(0))$ for any $1\le p< \infty$. Then, by letting $R\to 0$ we get $u\in W^{2,p}_{loc}(\mathbb{R}_+^N\backslash \{0\}$ for any $1\le p<\infty$, by which we get $u\in C^{1,\alpha}(\mathbb{R}_+^N\backslash \{0\})$, where $\alpha\in (0,1)$.

For any $R>0$ and $\sigma\in (0, R)$, we define 
\begin{align*}
	&D_{\sigma, R}:=\{x=(x', x_N)\in \mathbb{R}^{N-1}\times [\sigma, +\infty): |x|\leq R^2 \},\\
	&\partial D_{\sigma, R}^1:= \{x=(x', x_N)\in \mathbb{R}^{N-1}\times \{x_N= \sigma\}: |x'|^2\leq R^2-\sigma^2 \},\\
	&\partial D_{\sigma, R}^2:= \{x=(x', x_N)\in \mathbb{R}^{N-1}\times [\sigma, +\infty): |x|^2= R^2\}.
\end{align*}
Clearly, $\partial D_{\sigma, R}=\partial D_{\sigma, R}^1\cup \partial D_{\sigma, R}^2$.

In what follows, we derive the Pohozaev identity on $D_{\sigma, R}$. To do this, we multiply
 \eqref{madaua} 
 by $x\cdot \nabla u$ and apply the
integration by parts formula (refer to \cite[Theorem 5.2]{LLT2021}):
\begin{equation*}
	\int_{D_{\sigma, R}}g(u)(x\cdot \nabla u) dx=\int_{D_{\sigma, R}}x\cdot \nabla G(u) dx= -N \int_{D_{\sigma, R}} G(u)dx+ \int_{\partial D_{\sigma, R} }G(u) x\cdot \nu dS
\end{equation*}
and
\begin{equation*}
	2\int_{D_{\sigma, R}}\frac{u}{|x|^2} (x\cdot \nabla u)dx= \int_{D_{\sigma, R}}\frac{x\cdot \nabla u^2}{|x|^2}dx =-(N-2)\int_{D_{\sigma, R}} \frac{u^2}{|x|^2}dx+ \int_{\partial D_{\sigma, R}}\frac{u^2}{|x|^2} x\cdot \nu dS.
\end{equation*}
Similarly, we obtain
\begin{equation*}
	\int_{D_{\sigma, R}}(x\cdot \nabla u)\Delta u dx=
	\frac{N-2}{2}\int_{D_{\sigma, R}} |\nabla u|^2dx+\int_{\partial D_{\sigma, R}}\left((x\cdot \nabla u)\nabla u-\frac{|\nabla u|^2}{2}x\right)\cdot \nu dS.
\end{equation*}
Thus, we arrive at
\begin{equation}\label{feifei}
	\begin{split}
		&\frac{2-N}{2}\int_{D_{\sigma, R}} \left(|\nabla u|^2-\frac{\mu }{|x|^2}u^2\right) dx +N\int_{D_{\sigma, R}} G(u)dx\\
		&=\int_{\partial D_{\sigma, R}} \left[ \left((x\cdot \nabla u)\nabla u-\frac{|\nabla u|^2}{2}x\right)\cdot \nu + \frac{\mu}{2}\frac{u^2}{|x|^2} x\cdot \nu+ G(u) x\cdot \nu \right] dS. 
	\end{split}
\end{equation}

Next, we demonstrate that the boundary term converges to $0$ as $\sigma\to 0$ and $R\to \infty$. Since $u$ satisfies the Neumann boundary condition, we deduce, for any $R>0$, that
\begin{align*}
	&\int_{\partial D_{\sigma, R}}\left[ \left((x\cdot \nabla u)\nabla u-\frac{|\nabla u|^2}{2}x\right)\cdot \nu +\frac{\mu}{2}\frac{u^2}{|x|^2} x\cdot \nu+ G(u) x\cdot \nu  \right]dS\\
	&= \int_{\partial D_{\sigma, R}^1}\left[(x\cdot \nabla u)(-\partial_{x_N}u)+\frac{\sigma|\nabla u|^2}{2}-\frac{\mu\sigma}{2}\frac{ u^2}{|x|^2}- \sigma G(u)\right]dS\\
	&+\int_{\partial D_{\sigma, R}^2}\frac{R}{2}\left(|\nabla u|^2+ \frac{\mu}{|x|^2}u^2+ 2G(u)\right) dS\\
&\to \int_{\partial B_R^{2,+}}\frac{R}{2}\left(|\nabla u|^2+ \frac{\mu}{|x|^2}u^2+ 2G(u)\right) dS~~\mbox{as}~~\sigma\to0^+,
\end{align*}
where $\partial B_R^{2,+}:=\mathbb{R}^N_+\cap \partial B_R(0)$.

Denote $A(x, u)=|\nabla u|^2+\frac{\mu}{|x|^2}u^2+2|G(u)|$. We claim that there exists a sequence $\{R_n\}\subset \mathbb{R}^+$ such that 
\[\lim_{n\to \infty} R_n\int_{\partial B_R^{2,+}} A(x,u) dS=0~~\mbox{as}~~R_n\to \infty.\]
If not, we may assume that 
\[\liminf_{R\to \infty}R\int_{\partial B_R^{2,+}} A(x,u) dS\geq c\]
for some $c>0$. Then, there exists a constant $R^*>0$ such that
\[\int_{\partial B_R^{2,+}} A(x,u) dS\geq \frac{c}{R}, ~~\forall R\geq R^*,\]
which implies that 
\begin{align*}
\int_{\mathbb{R}_+^N}A(x,u)dx
	\geq \int_{R^*}^{\infty}\int_{\partial B_R^{2,+}}A(x,u)dS dR
	\geq c \ln{\frac{R}{R^*}}\to \infty~~\mbox{as}~~R\to \infty,
\end{align*}	
which is a contradiction to Lemma \ref{hardy} and the fact that $u\in H_{+,\mu}$. Thus we can derive the Pohozaev identity from \eqref{feifei}.
\end{proof}

Let $\delta=\frac{1}{\bar{\mu}}$ and $C_{\bar{\mu}}=\frac{80}{\bar\mu}+4$.
By the Gagliardo-Nirenberg inequality and Lemma \ref{hardy}, for every $u\in \mathcal{S}_{+,a}$, we have
\begin{equation*}
	\begin{split}
	  J(u)
	  &\geq \frac{1}{2}(1-\frac{2\mu}{\bar{\mu}})\int_{\mathbb{R}_+^N}|\nabla u|^2 dx-\frac{\mu C_{\bar{\mu}}}{2}\int_{\mathbb{R}_+^N}u^2 dx- \frac{1}{q}\int_{\mathbb{R}_+^N}|u|^q dx-\frac{1}{p}\int_{\mathbb{R}_+^N}|u|^p dx\\
	  &\geq \frac{1}{2}(1-\frac{2\mu}{\bar{\mu}})\int_{\mathbb{R}_+^N}|\nabla u|^2 dx-\frac{\mu C_{\bar{\mu}}}{2}\int_{\mathbb{R}_+^N}u^2 dx-\frac{C_{N,q}^q}{q}a^{\frac{q(1-\gamma_q)}{2}}\|u\|^{q\gamma_q}-\frac{C_{N,p}^p}{p}a^{\frac{p(1-\gamma_p)}{2}}\|u\|^{p\gamma_p}\\
	  &=\frac{1}{2}(1-\frac{2\mu}{\bar{\mu}})\|u\|^2- \frac{C_{N,q}^q}{q}a^{\frac{q(1-\gamma_q)}{2}}\|u\|^{q\gamma_q}-\frac{C_{N,p}^p}{p}a^{\frac{p(1-\gamma_p)}{2}}\|u\|^{p\gamma_p}-(\frac{1}{2}-\frac{\mu}{\bar{\mu}}+\frac{\mu C_{\bar{\mu}}}{2})a.
	\end{split}
\end{equation*}

We define a function $h:[0, +\infty) \to \mathbb{R}$ by
$$
h(t):=\frac{1}{2}(1-\frac{2\mu}{\bar{\mu}})t^2-\frac{C_{N,p}^p}{p}a^{\frac{p(1-\gamma_p)}{2}}t^{p\gamma_p}-\frac{C_{N,q}^q}{q}a^{\frac{q(1-\gamma_q)}{2}}t^{q\gamma_q}
-(1-\frac{\mu}{\bar{\mu}}+\frac{\mu C_{\bar{\mu}}}{2})a.
$$

\begin{lemma}\label{aaa}
There exists a constant $a_*>0$ such that for any $a\in (0, a_*)$ and any $\mu\in (0, \frac{\bar{\mu}}{4})$, the function $h(t)$ attains a unique positive global maximum in the integral $(a^{\frac{1}{2}}, +\infty)$.
\end{lemma}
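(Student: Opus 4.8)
The plan is to treat $h$ as a one-variable function and to read its shape off the ordering of the three exponents $q\gamma_q$, $2$ and $p\gamma_p$. First I would record the elementary facts $q\gamma_q=\frac{N(q-2)}{2}<2<\frac{N(p-2)}{2}=p\gamma_p$ (equivalently $q<2_*<p$), which is what makes $h$ tend to $-\infty$ rather than $+\infty$, together with $\gamma_q,\gamma_p<1$ (from $q,p<2^*$), which guarantees that the exponents $\frac{q(1-\gamma_q)}{2}$ and $\frac{p(1-\gamma_p)}{2}$ of $a$ are strictly positive. Writing $h(t)=\alpha t^2-\beta t^{p\gamma_p}-\gamma t^{q\gamma_q}-C_0$ with $\alpha=\frac12(1-\frac{2\mu}{\bar\mu})$, $\beta=\frac{C_{N,p}^p}{p}a^{p(1-\gamma_p)/2}$, $\gamma=\frac{C_{N,q}^q}{q}a^{q(1-\gamma_q)/2}$ and $C_0=(1-\frac{\mu}{\bar\mu}+\frac{\mu C_{\bar\mu}}{2})a$, the restriction $\mu\in(0,\bar\mu/4)$ yields the two-sided bound $\tfrac14<\alpha<\tfrac12$ and $C_0>\tfrac34a$, both uniform in $\mu$; this uniformity is exactly what will let me pick $a_*$ independently of $\mu$, as the later compactness argument requires.

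Next I would locate the critical points through the identity $h'(t)=t\,(2\alpha-\eta(t))=0$, where $\eta(t):=p\gamma_p\beta\,t^{p\gamma_p-2}+q\gamma_q\gamma\,t^{q\gamma_q-2}$. Since $p\gamma_p-2>0$ while $q\gamma_q-2<0$, a short computation gives $\eta'(t)=t^{q\gamma_q-3}\bigl[p\gamma_p(p\gamma_p-2)\beta\,t^{p\gamma_p-q\gamma_q}+q\gamma_q(q\gamma_q-2)\gamma\bigr]$, whose bracket increases strictly from a negative constant to $+\infty$ and hence vanishes exactly once; thus $\eta$ is strictly decreasing and then strictly increasing, with $\eta(0^+)=\eta(+\infty)=+\infty$ and a unique interior minimum. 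Consequently $\eta(t)=2\alpha$ has exactly two roots $t_1<t_2$ precisely when $\min_t\eta<2\alpha$, and since $\min_t\eta\le\eta(1)=p\gamma_p\beta+q\gamma_q\gamma\to0$ as $a\to0$ while $2\alpha>\tfrac12$, this holds for all small $a$ (uniformly in $\mu$, as $\eta(1)$ carries no $\mu$-dependence). Reading the sign of $h'=t(2\alpha-\eta)$ off the graph of $\eta$ shows $h$ decreases on $(0,t_1)$, increases on $(t_1,t_2)$ and decreases on $(t_2,+\infty)$, so $t_2$ is the unique interior local maximizer and hence the only possible interior global maximizer, while $h(t)\to-\infty$ as $t\to+\infty$ because $p\gamma_p>2$.

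It then remains to place the maximizer inside $(a^{1/2},+\infty)$ and to make its value positive. Using $\alpha<\tfrac12$ and $C_0>\tfrac34a$ I obtain $h(a^{1/2})\le\alpha a-C_0<-\tfrac14a<0$, so the left endpoint contributes nothing; and for $a<1$ the point $t=1$ lies in $(a^{1/2},+\infty)$ with $h(1)=\alpha-\beta-\gamma-C_0\to\alpha\ge\tfrac14$ as $a\to0$, so $h(1)>0$ for $a$ small. Hence $\sup_{(a^{1/2},+\infty)}h\ge h(1)>0>h(a^{1/2})$, while $h\to-\infty$ at infinity, so the supremum is attained at an interior point; being an interior local maximum it must coincide with $t_2$, which in particular forces $t_2>a^{1/2}$. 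This identifies the unique positive global maximum of $h$ on $(a^{1/2},+\infty)$. Finally I would fix $a_*\in(0,1]$ so small that $\eta(1)<\tfrac12$ and $h(1)>0$ hold simultaneously; both conditions are expressed purely through the $\mu$-independent quantities $\beta,\gamma$ and the uniform bounds on $\alpha,C_0$, so the resulting $a_*$ is valid for every $\mu\in(0,\bar\mu/4)$.

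I expect the main obstacle to be the structural claim that $h$ has a single local maximum: everything hinges on proving that $\eta$ is genuinely unimodal, so that $\eta=2\alpha$ cannot have more than two roots, and then on arranging every smallness threshold to depend only on the $\mu$-independent data rather than on $\mu$ itself, since it is this uniformity that the subsequent analysis of the Lagrange multipliers exploits.
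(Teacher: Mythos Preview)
Your argument is correct and takes a genuinely different route from the paper's. The paper factors out $t^{q\gamma_q}$ and rewrites $h(t)>0$ as $R(t)\,t^{q\gamma_q}>\gamma\,t^{q\gamma_q}+C_0$, where $R(t)=\alpha\,t^{2-q\gamma_q}-\beta\,t^{p\gamma_p-q\gamma_q}$; it then computes the maximizer $\bar t$ of $R$ in closed form and verifies by explicit algebra that $\bar t>a^{1/2}$ and $h(\bar t)>0$ for $a$ below a $\mu$-independent threshold, concluding uniqueness with the bare assertion that ``by fundamental analysis $h'$ has exactly one zero''. You instead analyze $h'(t)=t\,(2\alpha-\eta(t))$ directly, prove $\eta$ is unimodal (so $h$ has exactly two critical points $t_1<t_2$, a local minimum and a local maximum), and exhibit positivity at the concrete test point $t=1$ together with $h(a^{1/2})<0$. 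Your unimodality argument is a sharper justification of uniqueness than the paper's one-line claim---indeed, your own decomposition shows $h'$ has \emph{two} zeros, not one, once $\min\eta<2\alpha$---and avoids all explicit formulas; the paper's approach, by contrast, yields a concrete $\bar t$ which it later reuses to set $k=\bar t^{\,2}-a$, but since $\bar t$ is the maximizer of $R$ rather than of $h$, any point $t_0\in(a^{1/2},\infty)$ with $h(t_0)>0$ would serve that purpose, so your $t_2$ can replace $\bar t$ in the sequel without change.
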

\begin{proof}
For $t>0$, we have $h(t)>0$ if and only if 
\begin{eqnarray}\label{10-12-1}
R(t)t^{q\gamma_q}>\frac{C_{N,q}^q}{q}a^{\frac{q(1-\gamma_q)}{2}}t^{q\gamma_q}+(\frac{1}{2}-\frac{\mu}{\bar{\mu}}+\frac{\mu C_{\bar{\mu}}}{2})a
\end{eqnarray}
where 
\[R(t)=\frac{1}{2}(1-\frac{2\mu}{\bar{\mu}})t^{2-q\gamma_q}-\frac{C_{N,p}^p}{p}a^{\frac{p(1-\gamma_p)}{2}}t^{p\gamma_p-q\gamma_q}.\]
We now consider $R(t)$.
 Direct calculation yields 
\begin{equation*}
	R'(t)=\frac{1}{2}(1-\frac{2\mu}{\bar{\mu}})(2-q\gamma_q)t^{1-q\gamma_q}-\frac{C_{N,p}^p}{p}a^{\frac{p(1-\gamma_p)}{2}}(p\gamma_p-q\gamma_q)t^{p\gamma_p-q\gamma_q-1}.
\end{equation*}
Set $$
\eta_1=\frac{p(1-\frac{2\mu}{\bar{\mu}}) (2-q\gamma_q)}{2C_{N,p}^p(p\gamma_p-q\gamma_q)},~~~~
\eta_2=\frac{(1-\frac{2\mu}{\bar{\mu}})(p\gamma_p-2)}{2(p\gamma_p-q\gamma_q)}.
$$
We find that $R(t)$ has a global maximum at the point 
\begin{equation*}\label{uijin}
	\bar{t}=
\eta_1^{\frac{1}{p\gamma_p-2}}
a^{-\frac{p(1-\gamma_p)}{2(p\gamma_p-2)}},
\end{equation*}
and at this point, we have
\[
R(\bar{t})=
\eta_1^{\frac{2-q\gamma_q}{p\gamma_p-2}}\eta_2 a^{-\frac{p(1-\gamma_p)(2-q\gamma_q)}{2(p\gamma_p-2)}}.
\]
Define 
$$
\theta_1=1+ \frac{p(1-\gamma_p)}{p\gamma_p-2},~~~~a_1=\eta_1^{\frac{2}{\theta_1 (p\gamma_p-2)}}.
$$
It can be readily observed that $\bar{t}>a^{\frac{1}{2}}$ for all $a\in (0, a_1)$.

Next, denote \[\theta_2= \frac{q(1-\gamma_q)}{2}+ \frac{p(1-\gamma_p)(2-q\gamma_q)}{2(p\gamma_p-2)},~ ~ \theta_3=\frac{p-2}{p\gamma_p-2}.\]
Assuming $h(\bar{t})>0$, by (\ref{10-12-1}) and the value of $C_{\bar{\mu}}$ from Lemma \ref{hardy} we obtain 
\begin{equation}\label{bao}
\frac{C_{N,q}^q}{q}\eta_1^{\frac{q\gamma_q}{p\gamma_p-2}}a^{\theta_2}+(1+\frac{39\mu}{\bar{\mu}}+2\mu)a^{\theta_3}<\eta_1^{\frac{2}{p\gamma_p-2}}\eta_2.
\end{equation}
In view of $\mu\in (0, \frac{\bar{\mu}}{4})$, we find  
$$
0<\frac{p (2-q\gamma_q)}{4C_{N,p}^p(p\gamma_p-q\gamma_q)}\le \eta_1\le \frac{p (2-q\gamma_q)}{2C_{N,p}^p(p\gamma_p-q\gamma_q)}
$$
and
$$
0<\frac{p\gamma_p-2}{4(p\gamma_p-q\gamma_q)}\le \eta_2\le \frac{p\gamma_p-2}{2(p\gamma_p-q\gamma_q)}.
$$
Since all the constants $\theta_1, \theta_2, \theta_3$ are positive and independent of $\mu$, there exists $a_2>0$, also independent of $\mu$, such that \eqref{bao} holds for $a\in (0,a_2)$. 
Let 
$$
\tilde{a}_1=\left(\frac{p (2-q\gamma_q)}{4C_{N,p}^p(p\gamma_p-q\gamma_q)}\right)^{\frac{2}{\theta_1 (p\gamma_p-2)}}.
$$
Then, $\tilde{a}_1\in (0, a_1)$ and is dependent of $\mu$ such that $\bar{t}>a^{\frac{1}{2}}$ for all $a\in (0, \tilde{a}_1)$.

Take $a_*=\min\{\tilde{a}_1, a_2\}$. By fundamental analysis we know that the function $h'(t)$ has exactly one zero for $t>0$. Therefore, for $a\in (0,a_*)$, the function $h(t)$ has a unique positive global maximum in the integral $(a^{\frac{1}{2}}, +\infty)$.
\end{proof}

\section{Existence of local minimizers}
In this section, we apply Ekeland variational principle to establish the existence of local minimum normalized solutions.

For $a\in (0, a_*)$, take $k=\bar{t}^2-a>0$ and define the set $A_{k,a}=\{u\in \mathcal{S}_{+,a}: \|\nabla u\|_2^2
\leq k\}$. We consider the following local minimization problem: for any $a\in (0, a_*)$,
\[m_a:= \inf_{u\in A_{k,a}}J(u).\]
Our goal is to find a local minimizer of $J$ in the set $A_{k,a}$.
\begin{lemma}\label{exist}
Assume that $a\in (0, a_*)$. There exists $\mu_a\in (0, \frac{\bar{\mu}}{4})$ such that for any $\mu\in (0,\mu_a)$, $m_a$ is achieved by some $u_0\in A_{k,a}$ which satisfies
\begin{eqnarray*}
m_a=J (u_0)<0<\frac{a}{2}\le \inf_{u\in \partial A_{k,a}}J (u).
\end{eqnarray*}
\end{lemma}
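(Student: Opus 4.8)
The strategy is to exploit the auxiliary function $h(t)$ from Lemma \ref{aaa} as a lower barrier for $J$ on the annular region $A_{k,a}$, and then combine this with a suitable upper estimate to locate the minimizer strictly inside $A_{k,a}$. First I would recall the lower bound established just before Lemma \ref{aaa}: for every $u\in\mathcal{S}_{+,a}$ one has $J(u)\ge h(\|\nabla u\|_2)$ (modulo the precise bookkeeping of the $a$-term, which matches the definition of $h$). Since Lemma \ref{aaa} tells us that $h$ attains its unique positive global maximum at some point $\bar t > a^{1/2}$ and that $h(\bar t)>0$, the choice $k=\bar t^2 - a>0$ places the boundary $\partial A_{k,a}=\{u\in\mathcal S_{+,a}:\|\nabla u\|_2^2 = k\}$ exactly at the radius where $h$ is maximal in the sense that $\|\nabla u\|_2^2 + a = \bar t^2$, i.e. $\|u\|^2=\bar t^2$ on $\partial A_{k,a}$ after accounting for the mass term. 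The key structural fact I would extract is that $h$ is \emph{negative near the origin} (because the lowest-order term is the constant $-(1-\frac{\mu}{\bar\mu}+\frac{\mu C_{\bar\mu}}{2})a<0$, while the $t^2$ term dominates only for larger $t$), \emph{positive at $\bar t$}, and its shape guarantees a sharp separation between the value at small $t$ and the value on the sphere $\|u\|=\bar t$.

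Next I would prove the three claims of the conclusion in turn. For the boundary estimate $\inf_{u\in\partial A_{k,a}}J(u)\ge \frac a2$, I would evaluate the barrier $h$ at $t=\bar t$ (the radius realized on $\partial A_{k,a}$) and show $h(\bar t)\ge \frac a2$; this is where smallness of $a$ and of $\mu$ enters, since $h(\bar t)$ scales like a positive power of the coefficients that stay bounded below by Lemma \ref{aaa}, whereas $\frac a2$ is linear in $a$ — so for $a\in(0,a_*)$ and $\mu$ small the gap can be made to hold. To obtain $m_a<0$, I would produce a single test function $u\in A_{k,a}$ with $J(u)<0$: taking a fixed profile and scaling its gradient down (via the scaling $t\star u$ or a dilation keeping the mass fixed) drives $\|\nabla u\|_2^2$ small, so that $J(u)$ is controlled by $-\frac1q\|u\|_q^q-\frac1p\|u\|_p^p+o(1)<0$, and for small enough gradient this lies inside $A_{k,a}$. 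The strict inequality $m_a<0<\frac a2\le \inf_{\partial A_{k,a}}J$ then shows any minimizing sequence stays in the \emph{interior} of $A_{k,a}$, away from the boundary sphere.

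Finally, for attainment I would take a minimizing sequence $\{u_n\}\subset A_{k,a}$. By the boundedness of $\|\nabla u_n\|_2$ (built into $A_{k,a}$) and the equivalence of $\|\cdot\|_{H_{+,\mu}}$ with $\|\cdot\|_{H^1}$ from Lemma \ref{hardy}, the sequence is bounded in $H_{+,\mu}$. Passing to the radial subspace via decreasing rearrangement (which lowers $J$ and preserves the constraint, as announced in the introduction) I may assume $u_n\in H_{r,\mu}$; the compact embedding $H^1_{rad}(\mathbb R^N_+)\hookrightarrow L^s$ for $s\in(2,2^*)$ then gives strong convergence of the $L^q$ and $L^p$ terms, while the Hardy term is handled by Lemma \ref{hardy} together with weak lower semicontinuity of the quadratic part. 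The weak limit $u_0$ lies in $A_{k,a}$ and attains $m_a$; because $m_a<0$ the limit is nontrivial and the mass is preserved (the constraint cannot be lost since otherwise the energy would increase).

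\textbf{Main obstacle.} The delicate point is the boundary inequality $\inf_{\partial A_{k,a}}J\ge \frac a2$: one must show that $h(\bar t)$ stays bounded \emph{below} by $\frac a2$ uniformly as $\mu\to 0$, which forces the careful choice of $\mu_a$ and uses that $a_*$ in Lemma \ref{aaa} is independent of $\mu$. Verifying that the minimizing sequence does not escape to the boundary — equivalently that the infimum is a genuine \emph{local} minimum separated from $\partial A_{k,a}$ — is exactly what makes $u_0$ an interior critical point of $J|_{\mathcal S_{+,a}}$ and hence a normalized solution, rather than a constrained minimizer on the sphere $\partial A_{k,a}$.
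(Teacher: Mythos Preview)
Your treatment of the boundary estimate $J|_{\partial A_{k,a}}\ge \tfrac a2$ and of $m_a<0$ is essentially the paper's. The attainment step, however, differs substantially from the paper and contains a genuine gap.

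The paper does \emph{not} argue by the direct method. After replacing the minimizing sequence by nonnegative radially decreasing functions, the paper applies \textbf{Ekeland's variational principle} to upgrade $\{u_n\}$ to a Palais--Smale sequence for $J|_{\mathcal S_{+,a}}$. This yields Lagrange multipliers $\lambda_n$, and the identity $2J(u_n)-\big[J'(u_n)u_n+\lambda_n a\big]=o(1)$ combined with $m_a<0$ gives the uniform lower bound $\lambda_n\ge -\tfrac{2m_a}{a}>0$. Strong convergence in $H_{+,\mu}$ then follows by subtracting the limit equation and applying Lemma~\ref{hardy}: one needs $\mu C_{\bar\mu}<\lambda_*$, and \emph{this} is where $\mu_a$ is fixed (so that $\mu_a C_{\bar\mu}\le -m_a$). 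Thus you have misidentified the role of $\mu_a$: the boundary inequality already holds for every $\mu\in(0,\bar\mu/4)$ by the construction of $a_*$ in Lemma~\ref{aaa}; smallness of $\mu$ enters only in the compactness step.

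Your direct argument has two concrete problems. First, the quadratic part $Q(u)=\int|\nabla u|^2-\mu\int\frac{u^2}{|x|^2}$ is not obviously weakly lower semicontinuous: Lemma~\ref{hardy} only shows that $Q(u)+C\|u\|_2^2$ is an equivalent Hilbert norm squared, which yields merely $Q(u_0)\le \liminf Q(u_n)+C(a-\|u_0\|_2^2)$, useless unless you already know $\|u_0\|_2^2=a$. Second, the radial compact embedding gives strong $L^r$ convergence only for $r\in(2,2^*)$, not for $r=2$, so mass can leak. Your assertion that ``the constraint cannot be lost since otherwise the energy would increase'' is unsupported: making it rigorous would require a strict monotonicity or subadditivity property of $a\mapsto m_a$, which you do not supply and which is itself delicate here because of the Hardy term. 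The paper's route via Ekeland and the positivity of $\lambda$ bypasses both issues simultaneously, since strong $H^1$ convergence automatically recovers the full mass.
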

\begin{proof}
Fix  $a\in (0, a_*)$.
By Lemma \ref{aaa}, it follows that $J(u)\ge\frac{a}{2}$ for all $u\in \partial A_{k,a}$.
Now, for any $u\in \mathcal{S}_{+,a}$, define
\[s\star u(x):= e^{\frac{Ns}{2}}u(e^s x),~~s\in \mathbb{R}.\]
We can then get
\begin{equation}\label{zhuguang}
	J(s\star u)\leq \frac{e^{2s}}{2}\int_{\mathbb{R}_+^N}|\nabla u|^2 dx-\frac{e^{q\gamma_q s}}{2q}\int_{\mathbb{R}_+^N}|u|^q dx-\frac{e^{p\gamma_p s}}{2p}\int_{\mathbb{R}_+^N}|u|^p dx.
\end{equation}
Taking into account the fact that $2<q<2+\frac{4}{N}<p<2^*$, we see that 
$J(s\star u)\to 0^-$ as $s\to -\infty$.
Therefore, there exists sufficiently negative $s_0<0$ such that $\|\nabla (s_0\star u)\|_2^2= e^{2s_0}\|\nabla u\|_2^2<\bar{t}$ and $J(s_0\star u)<0$. This implies that $m_a<0$.
 
Considering a minimizing sequence $\{v_n\}$ of $J$ in $A_{k,a}$ at the level $m_a$. 
Without loss of generality, we can replace $v_n$ by $|v_n|$ to ensure that $v_n\geq 0$.  
 By applying the decreasing rearrangement and 
 Ekeland’s variational principle, we can obtain a new minimizing sequence $\{u_n\}\subset A_{k,a}$ for $m_a$. This new sequence
$\{u_n\}$ satisfies the property that $\|u_n- v_n\|\to 0$ as $n\to \infty$,  and it forms a Palais-Smale sequence for $J$ on $\mathcal{S}_{+,a}$. It is easily seen that $||u_n||^2\leq a+k$,  which implies that $\{u_n\}$ is bounded. By standard arguments, there exists a sequence $\{\lambda_n\}\subset \mathbb{R}$
such that
\begin{equation}\label{weakcon}
	J'(u_n)\varphi+ \lambda_n \int_{\mathbb{R}_+^N}u_n\varphi dx\to 0
~~ \mbox{as}~~n\to \infty,~\forall \varphi\in H_{+,\mu}.
\end{equation}
We also deduce that there exists $u_0\in H_{r,\mu}$ such that, up to a subsequence,
$u_n\rightharpoonup u_0$ weakly in $H_{+,\mu}$ and $u_n \rightarrow u_0$ strongly in
$L^r(\mathbb{R}_+^N)$ for $r\in(2, 2^*)$.

Taking $u_n$ as the test function in (\ref{weakcon}), we obtain
\begin{equation}\label{boundness}
	\lambda_n a=-\int_{\mathbb{R}_+^N}\left(|\nabla u_n|^2-\frac{\mu}{|x|^2}u_n^2 \right)dx +\int_{\mathbb{R}_+^N}\left(|u_n|^q+|u_n|^p\right) dx.
\end{equation}
The boundness of $\{ u_n \}$ and \eqref{boundness} imply that
$\{\lambda_n\}$ is bounded as well.
Thus, up to a subsequence, we have $\lambda_n\to \lambda_*\in \mathbb{R}$.

We claim that $\lambda_*>0$. Indeed, in view of $J(u_n)=m_a+o_n(1)$, together with \eqref{boundness} we deduce that
\begin{align*}
 2m_a+o(1)&=2J(u_n)-\big[J'(u_n)u_n+\lambda_n a\big]\\
	&= -\lambda_n a+ \frac{ (q-2)}{q}\int_{\mathbb{R}_+^N}|u_n|^q dx+ \frac{ (p-2)}{p}\int_{\mathbb{R}_+^N}|u_n|^p dx\geq -\lambda_n a.
\end{align*}
Thus, $\lambda_n \geq -\frac{2m_a}{a}>0$.
Up to a subsequence,
\begin{eqnarray*}\label{lambdabdd}
	\lambda_n\to \lambda_*\geq -\frac{2m_a}{a} >0 ~~\mbox{as}~~n\to \infty.
\end{eqnarray*}
By weak convergence and \eqref{weakcon} it follows that
\begin{equation}\label{weaklim}
	J'(u_0)\varphi-\lambda_* \int_{\mathbb{R}_+^N}u_0\varphi dx = 0
\end{equation}
for every $\varphi \in H_{+,\mu}$.
Choosing $\varphi= u_n-u_0$ in \eqref{weakcon} and \eqref{weaklim} and subtracting, we can prove
\begin{equation*}
	\int_{\mathbb{R}_+^N}\left[|\nabla (u_n-u_0)|^2-\frac{\mu}{|x|^2}|u_n-u_0|^2 \right]dx +\lambda_*\int_{\mathbb{R}_+^N}|u_n-u_0|^2 dx=o_n(1).
\end{equation*}
By Lemma \ref{hardy}, taking $\mu_a\in (0, \frac{\bar{\mu}}{4})$ such that $\mu_a C_{\bar{\mu}}\leq -m_a$. Then, for $\mu \in (0, \mu_a)$, we have
\[
\frac{1}{2}\int_{\mathbb{R}_+^N}|\nabla (u_n-u_0)|^2 dx+ \frac{\lambda_*}{2}\int_{\mathbb{R}_+^N}(u_n-u_0)^2 dx\leq o_n(1).\]
This establishes the strong convergence of $\{u_n\}$ in $H_{+,\mu}$. 
\end{proof}

\section{Existence of mountain pass type solutions}

This section is devoted to establishing the existence of mountain pass type normalized solutions. 

\subsection{Approximating problems}

We introduce a family of functionals $J_\rho: H_{r,\mu}\to \mathbb{R}$ defined by
\begin{equation*}\label{para}
	J_{\rho}(u)=\frac{1}{2}\int_{\mathbb{R}_+^N}|\nabla u|^2 dx- \frac{1}{2}\int_{\mathbb{R}_+^N}\frac{\mu}{|x|^2}u^2 dx- \frac{\rho}{q}\int_{\mathbb{R}_+^N}|u|^q dx-\frac{\rho}{p}\int_{\mathbb{R}_+^N}|u|^p dx,~~\forall \rho\in \left[\frac{1}{2}, 1\right].
\end{equation*}
By the arguments in the previous section, it follows that for any $a\in (0, a_*)$ and any $u\in \partial A_{k,a}$, we have $J_\rho (u)\geq J(u) \geq h(\|u\|)\ge \frac{a}{2}>0$, for any $\rho \in [\frac{1}{2}, 1]$.

For any $a\in (0, a_*)$, we consider 
\[m_\rho(a)= \inf_{u\in A_{k,a}}J_\rho(u).\]
Similar to Lemma \ref{exist}, for any $\mu\in (0, \mu_a)$, $m_\rho(a)$ is achieved by some $u_{0,\rho}\in A_{k,a}$ which satisfies
\begin{eqnarray*}\label{Estar}
m_\rho(a)=J_\rho (u_{0,\rho}) =\inf_{u\in A_{k,a}} J_\rho(u)<0<\frac{a}{2}\le \inf_{u\in \partial A_{k,a}}J_\rho (u),~~\forall \rho\in \left[\frac{1}{2}, 1 \right].
\end{eqnarray*}
At this point, we are ready to show that the functional $J_\rho$ admits a mountain pass geometry uniformly for $\rho \in [\frac{1}{2}, 1]$ on the constraint $\mathcal{S}_{r,a}$.
\begin{lemma}\label{MP}
Assume that $a\in (0, a_*)$.
There exist $w_1, w_2\in \mathcal{S}_{r,a}$ independent of $\mu$ such that
\[c_\rho:= \inf_{\gamma\in \Gamma}\max_{t\in[0,1]}J_\rho(\gamma(t))>\max\{J_\rho(w_1), J_\rho(w_2) \} ,~~ \forall \rho\in \left[\frac{1}{2},1 \right], \]
where 
	\begin{equation*}\label{Gamma}
		\Gamma:=\{\gamma\in C([0,1], \mathcal{S}_{r,a}) : \gamma(0)=w_1 , ~\gamma(1)=w_2 \}.
	\end{equation*}
\end{lemma}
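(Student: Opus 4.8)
The plan is to realize the classical mountain pass geometry on the sphere by travelling along the scaling fibre $s\mapsto s\star u$, and to upgrade everything to $\mu$-uniformity using the two–sided bounds on $\bar t$ obtained in the proof of Lemma \ref{aaa}. First I would fix, once and for all, a radial function $u\in\mathcal{S}_{r,a}$ independent of $\mu$, and recall that $s\star u(x)=e^{\frac{Ns}{2}}u(e^sx)$ preserves both the $L^2$-mass and the radial symmetry, so that $s\star u\in\mathcal{S}_{r,a}$ for every $s\in\mathbb{R}$. A direct computation then gives
\[
J_\rho(s\star u)=\frac{e^{2s}}{2}\int_{\mathbb{R}_+^N}|\nabla u|^2\,dx-\frac{e^{2s}}{2}\int_{\mathbb{R}_+^N}\frac{\mu}{|x|^2}u^2\,dx-\frac{\rho e^{q\gamma_q s}}{q}\int_{\mathbb{R}_+^N}|u|^q\,dx-\frac{\rho e^{p\gamma_p s}}{p}\int_{\mathbb{R}_+^N}|u|^p\,dx,
\]
together with $\|\nabla(s\star u)\|_2^2=e^{2s}\|\nabla u\|_2^2$.

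Since the Hardy term is nonnegative and $\rho\ge\frac12$, I would bound $J_\rho(s\star u)$ from above, uniformly in $\mu\in(0,\frac{\bar\mu}{4})$ and $\rho\in[\frac12,1]$, by the $\mu$- and $\rho$-free envelope
\[
\Phi(s):=\frac{e^{2s}}{2}\int_{\mathbb{R}_+^N}|\nabla u|^2\,dx-\frac{e^{q\gamma_q s}}{2q}\int_{\mathbb{R}_+^N}|u|^q\,dx-\frac{e^{p\gamma_p s}}{2p}\int_{\mathbb{R}_+^N}|u|^p\,dx.
\]
Because $q<2_*<p$ gives $q\gamma_q<2<p\gamma_p$, the slowest-decaying term as $s\to-\infty$ is the (negative) $q$-term, so $\Phi(s)\to0^-$; while as $s\to+\infty$ the (negative) $p$-term dominates the positive gradient term, so $\Phi(s)\to-\infty$. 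In parallel I would record that the maximiser $\bar t$ of $h$, hence $k=\bar t^2-a$, stays in a compact subinterval $0<k_{\min}\le k\le k_{\max}$ that is independent of $\mu$; this is exactly what the bounds on $\eta_1$ in the proof of Lemma \ref{aaa} provide (and $k_{\min}>0$ since $\bar t>a^{\frac12}$ there).

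With these ingredients I would choose $s_1$ so negative that $\Phi(s_1)<0$ and $e^{2s_1}\|\nabla u\|_2^2<k_{\min}$, and $s_2$ so positive that $\Phi(s_2)<0$ and $e^{2s_2}\|\nabla u\|_2^2>k_{\max}$, and set $w_1:=s_1\star u$, $w_2:=s_2\star u$. These depend only on $u$, hence are independent of $\mu$, and by construction, for all $\rho\in[\frac12,1]$ and $\mu\in(0,\frac{\bar\mu}{4})$,
\[
\max\{J_\rho(w_1),J_\rho(w_2)\}\le\max\{\Phi(s_1),\Phi(s_2)\}<0,\qquad \|\nabla w_1\|_2^2<k<\|\nabla w_2\|_2^2,
\]
so $w_1\in A_{k,a}$ lies strictly inside the barrier while $w_2\in\mathcal{S}_{r,a}\setminus A_{k,a}$ lies strictly outside it.

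Finally I would close the argument by the intermediate value theorem: for any $\gamma\in\Gamma$ the map $t\mapsto\|\nabla\gamma(t)\|_2^2$ is continuous on $[0,1]$ (the $H_{r,\mu}$-norm being equivalent to the $H^1$-norm by Lemma \ref{hardy}), starts below $k$ at $w_1$ and ends above $k$ at $w_2$, so there is $t_0\in(0,1)$ with $\gamma(t_0)\in\partial A_{k,a}$. Invoking the already-established bound $J_\rho\ge\frac{a}{2}$ on $\partial A_{k,a}$ for every $\rho\in[\frac12,1]$, I obtain $\max_{t\in[0,1]}J_\rho(\gamma(t))\ge J_\rho(\gamma(t_0))\ge\frac{a}{2}$, and taking the infimum over $\gamma\in\Gamma$ yields $c_\rho\ge\frac{a}{2}>0>\max\{J_\rho(w_1),J_\rho(w_2)\}$, which is the assertion. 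I expect the only genuinely delicate point to be the $\mu$-uniformity: one single pair $(w_1,w_2)$ must separate the \emph{$\mu$-dependent} barrier $\partial A_{k,a}$ for every admissible $\mu$, and this is precisely secured by the uniform bounds $k_{\min}\le k\le k_{\max}$ together with the discarding of the nonnegative Hardy term to produce the $\mu$-free envelope $\Phi$.
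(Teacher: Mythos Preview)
Your argument is correct and shares the paper's overall scheme---travel along the scaling fibre $s\mapsto s\star u$ and use the barrier estimate $J_\rho\ge\frac{a}{2}$ on $\partial A_{k,a}$---but the choice of $w_1$ differs. The paper sets $w_1=u_{0,\frac12}$, the local minimizer of $J_{\frac12}$ on $A_{k,a}$ supplied by (the analogue of) Lemma~\ref{exist}, and takes only $w_2=s_1\star v$ on the scaling fibre for one large $s_1>0$; you instead place \emph{both} endpoints on the fibre of a fixed radial $u$, with $s_1$ very negative and $s_2$ very positive. Your route has the merit of yielding genuine $\mu$-independence of $(w_1,w_2)$, whereas the paper's $u_{0,\frac12}$ is a minimizer of a functional containing the Hardy term and so a priori depends on $\mu$ (indeed, the remark following the lemma only asserts $\rho$-independence, which is all that Theorem~\ref{abstract theorem} actually needs). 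The cost of your approach is that you must pin down the $\mu$-dependent barrier on both sides via $k_{\min}\le k\le k_{\max}$, which you correctly extract from the two-sided bounds on $\eta_1$ in the proof of Lemma~\ref{aaa}; the paper, having $w_1\in A_{k,a}$ for free, only implicitly needs the upper bound on $k$ to make its $s_1$ uniform.
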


\begin{proof}
For any $v\in \mathcal{S}_{r,a}$, we have 
\[J_\rho(s\star v)\leq \frac{e^{2s}}{2}\int_{\mathbb{R}_+^N}|\nabla v|^2 dx-\frac{e^{q\gamma_q s}}{q}\int_{\mathbb{R}_+^N}|v|^q dx -\frac{e^{p\gamma_p s}}{p}\int_{\mathbb{R}_+^N}|v|^p dx.\]
According to $2<q<2+\frac{4}{N}<p<2^*$, it follows that
$J_\rho(s\star v)\to -\infty$ as $s\to +\infty$.
Therefore, there exists a sufficiently large $s_1>0$ independent of $\mu$  such that $\|\nabla (s_1\star v)\|_2^2= e^{2s_1}\|\nabla v\|_2^2> k$ and $J_\rho(s_1\star v)< 0$.
In particular, we get
\begin{equation*}
	J_\rho(s_1\star v)<J_1(u_{0,\frac{1}{2}})\leq J_\rho(u_{0,\frac{1}{2}})\leq J_{\frac{1}{2}}(u_{0,\frac{1}{2}})<0,  ~\forall \rho\in \left[\frac{1}{2},1 \right].
\end{equation*}
We choose $w_1=u_{0,\frac{1}{2}}$ and $w_2=s_1\star v$. Clearly, $u_{0,\frac{1}{2}}\in A_{k,a}, s_1\star v\not\in A_{k,a}$.
By continuity,
for any $\gamma \in \Gamma$, there exists $t_\gamma \in [0,1]$ such that $ \gamma(t_\gamma)\in \partial A_{k,a}$. Hence, by Lemma \ref{exist}, we deduce
\begin{align*}
c_{\rho}\ge J_\rho(\gamma(t_\gamma)) \geq  \inf_{u\in \partial A_{k,a}}J_\rho(u)\ge \frac{a}{2}>0=\max\{J_\rho(w_1), J_\rho(w_2)\},  ~\forall \rho\in \left[\frac{1}{2},1 \right].
\end{align*}
This completes the proof.
\end{proof}

\begin{remark}
By the selection of $w_1$, $w_2$, and $\Gamma$, these functions remain independent of the parameter $\rho$.
\end{remark}

\begin{lemma}\label{thank}
	For any $\lambda<0$, there exists a subspace $Y$ of $H_{r,\mu}$ with $dim Y=3$ such that 
	\begin{equation*}
		\int_{\mathbb{R}_+^N}|\nabla w|^2 dx +\lambda \int_{\mathbb{R}_+^N} |w|^2 dx\leq \frac{\lambda}{2}\int_{\mathbb{R}_+^N}\left(|\nabla w|^2 +w^2 \right)dx, ~~\forall w\in Y.
	\end{equation*}
\end{lemma}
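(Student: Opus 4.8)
The plan is to reduce the claimed inequality to a single Rayleigh-quotient bound and then to build $Y$ out of scaled copies of one radial bump whose supports are pairwise disjoint.

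First I would rewrite the inequality. Collecting terms on one side and using $\lambda<0$, the asserted inequality is equivalent to
\[
\Bigl(1-\tfrac{\lambda}{2}\Bigr)\int_{\mathbb{R}_+^N}|\nabla w|^2\,dx \le -\tfrac{\lambda}{2}\int_{\mathbb{R}_+^N} w^2\,dx ,
\]
and since $1-\frac{\lambda}{2}>0$ this is in turn equivalent to
\[
\int_{\mathbb{R}_+^N}|\nabla w|^2\,dx \le \beta \int_{\mathbb{R}_+^N} w^2\,dx ,
\qquad \beta:=\frac{-\lambda}{2-\lambda}\in(0,1).
\]
So it suffices to exhibit a three-dimensional subspace of $H_{r,\mu}$ on which the Rayleigh quotient $\|\nabla w\|_2^2/\|w\|_2^2$ never exceeds $\beta$. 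Note that the Hardy potential plays no role in this lemma, since only $\int|\nabla w|^2$ and $\int w^2$ appear and all the functions I construct are supported away from the origin.

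Next I would fix a nontrivial radial bump $\psi\in C_c^\infty(\mathbb{R}_+^N)$ supported in the annulus $\{1\le|x|\le 2\}$, set $Q_0:=\|\nabla\psi\|_2^2/\|\psi\|_2^2$, and consider the dilations $\psi_t(x):=\psi(tx)$. A change of variables gives $\|\nabla\psi_t\|_2^2=t^{2-N}\|\nabla\psi\|_2^2$ and $\|\psi_t\|_2^2=t^{-N}\|\psi\|_2^2$, so the quotient of $\psi_t$ equals $t^2 Q_0$ and tends to $0$ as $t\to0$. Choosing scales $t_j=\varepsilon\,3^{-j}$ for $j=1,2,3$ with $\varepsilon>0$ small enough that $t_1^2 Q_0\le\beta$ (note $t_1$ is the largest of the three), each $\psi_{t_j}$ has quotient at most $\beta$. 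Moreover $\psi_{t_j}$ is supported in $\{3^{j}/\varepsilon\le|x|\le 2\cdot 3^{j}/\varepsilon\}$, and these annuli are pairwise disjoint since $2\cdot 3^{j}<3^{j+1}$. In particular the three functions are linearly independent, so $Y:=\mathrm{span}\{\psi_{t_1},\psi_{t_2},\psi_{t_3}\}\subset H_{r,\mu}$ has dimension $3$.

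Finally, for a general $w=\sum_{j=1}^3 c_j\psi_{t_j}$ the disjointness of the supports makes both quadratic forms additive, so
\[
\int_{\mathbb{R}_+^N}|\nabla w|^2\,dx=\sum_{j=1}^3 c_j^2\,\|\nabla\psi_{t_j}\|_2^2
\le\beta\sum_{j=1}^3 c_j^2\,\|\psi_{t_j}\|_2^2=\beta\int_{\mathbb{R}_+^N} w^2\,dx,
\]
which is exactly the reduced inequality. The one point demanding care is this passage from the three generators to the entire span: it is precisely the disjointness of the supports, and the resulting absence of cross terms in both $\int|\nabla w|^2$ and $\int w^2$, that lets the per-generator quotient bound survive under arbitrary linear combinations. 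This is the step I would flag as the crux of the argument; everything else is elementary scaling.
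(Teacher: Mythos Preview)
Your argument is correct and follows the same skeleton as the paper's: construct three test functions with pairwise disjoint supports, scale so that each has Rayleigh quotient $\|\nabla\cdot\|_2^2/\|\cdot\|_2^2$ as small as desired, and use the absence of cross terms to pass to the whole span. The only substantive difference is the geometry of the construction. The paper places a single bump near a point on the $x_N$-axis, scales it, and then translates in the $x_N$-direction to obtain three copies with disjoint supports; you instead take one radial bump in a half-annulus and dilate it by three different factors so that the supporting half-annuli are nested and disjoint. Your version has the advantage that each $\psi_{t_j}$ is a genuine function of $|x|$, so membership in $H_{r,\mu}$ is automatic, whereas the paper's translated bumps are at best radial in $x'$ only. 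One small wording issue: a nontrivial radial function supported in $\{1\le|x|\le2\}$ does not vanish on the flat boundary $\{x_N=0\}$, so strictly speaking it is not in $C_c^\infty(\mathbb{R}_+^N)$; it is, however, the restriction to $\mathbb{R}_+^N$ of a function in $C_c^\infty(\mathbb{R}^N)$ and hence lies in $H^1_{rad}(\mathbb{R}_+^N)=H_{r,\mu}$, which is all you need.
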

\begin{proof}
	Take $\phi\in C_0^\infty(\mathbb{R}_+^N)$ with $supp \phi \subset B_1(0,\cdots,0,\frac{3}{2})$ such that $\int_{\mathbb{R}_+^N} |\phi|^2 dx=1$, where $B_1(0,\cdots,0,\frac{3}{2})$ is a unit ball centered at $(0,\cdots,0,\frac{3}{2})\in \mathbb{R}_+^N$.
	We define the following functions for $\eta>0$:
	\begin{align*}
		&\psi_{0,\eta}(x)=\eta^{\frac{N}{2}}\phi (\eta x', \eta x_N),\\
		&\psi_{1,\eta}(x)=\eta^{\frac{N}{2}}\phi (\eta x', \eta x_N- 4\eta ),\\
		&\psi_{2,\eta}(x)=\eta^{\frac{N}{2}}\phi (\eta x', \eta x_N- 8\eta ).
	\end{align*}
Clearly, for any $\eta>0$,
\begin{align*}
	\int_{\mathbb{R}_+^N} |\psi_{0, \eta}|^2 dx =
	\int_{\mathbb{R}_+^N} \eta^N \phi^2(\eta x', \eta x_N) dx
	=\int_{\mathbb{R}_+^N} \phi^2(y', y_N) dy=1.
\end{align*}
Similarly, we can check that $\int_{\mathbb{R}_+^N}|\psi_{1,\eta}|^2dx=\int_{\mathbb{R}_+^N}|\psi_{2,\eta}|^2dx=1$.
In particular, $\psi_{0,\eta}(x)$, $\psi_{1,\eta}(x)$ and $\psi_{2,\eta}(x)$ are orthogonal in $H_{r,\mu}$, as they have disjoint supports.
Set $w=\sum\limits_{j=0}^2 \tau_j \psi_{j,\eta}$. Then 
\begin{align*}
	\int_{\mathbb{R}_+^N}|\nabla w|^2 dx+ \lambda\int_{\mathbb{R}_+^N}|w|^2 dx
	&=\sum_{j=0}^2\int_{\mathbb{R}_+^N} \tau_j^2 |\nabla \psi_{j, \eta}|^2 dx+ \lambda \sum_{j=0}^2 \int_{\mathbb{R}_+^N} \tau_j^2 |\psi_{j, \eta}|^2 dx\\
	&=\eta^2\left(\sum_{j=0}^2\int_{\mathbb{R}_+^N} \tau_j^2 |\nabla \phi|^2 dx \right)+ \lambda \sum_{j=0}^2\tau_j^2  \int_{\mathbb{R}_+^N} |\phi|^2 dx\\
	&=(\eta^2 \xi +\lambda)\sum_{j=0}^2 \tau^2_j,
\end{align*}
where $\xi:= \int_{\mathbb{R}_+^N}|\nabla \phi|^2 dx >0$.
Similarly, $\|w\|_{H^1}^2=(\eta^2 \xi +1)\sum_{j=0}^2 \tau^2_j$.
Therefore,
\[\frac{\int_{\mathbb{R}_+^N}\left(|\nabla w|^2+\lambda|w|^2 \right)dx}{\|w\|_{H^1}^2}=\frac{\eta^2\xi+\lambda}{\eta^2\xi +1}\leq \frac{\lambda}{2} \]
for sufficiently small $\eta>0$, and for every $\tau_0, \tau_1, \tau_2\in \mathbb{R}$.
\end{proof}

For a domain $\mathbb{R}^N_+$ and $\phi, u\in H_{r,\mu}$, we consider
\begin{equation*}\label{morse}
	Q(\phi;u;\mathbb{R}^N_+):= \int_{\mathbb{R}^N_+}\left(|\nabla \phi |^2-\frac{\mu}{|x|^2}|\nabla \phi |^2\right) dx+ \lambda\int_{\mathbb{R}^N_+}|\phi |^2 dx -\int_{\mathbb{R}^N_+}\left((p-1)|u|^{p-2}+(q-1)|u|^{q-2}\right)\phi^2 dx,
\end{equation*}
where $\lambda\in \mathbb{R}$.
The Morse index of $u$, denote by $m(u)$, is the maximum dimension of a subspace $W\subset H_{r,\mu}$ such that $Q(\phi;u;D)<0$ for all $\phi \in W\backslash \{0\}$.

We now recall a general setting from \cite{BCJS2022TAMS} (also referenced in \cite{BL-1983}).
Let $(E, \langle\cdot,\cdot\rangle)$ and $(H,(\cdot, \cdot))$
be two infinite-dimensional Hilbert spaces that are related through a chain of continuous injections:
$E\hookrightarrow H \hookrightarrow E'$,
where the continuous injection from $E$ to $H$ has a norm bounded by 1. For simplicity, We identify $E$ with its image in $H$. 

For any $u\in E$, we define two norms: $\|u \|^2= \langle u,u \rangle$ denotes the norm in $E$, and $|u|^2= (u,u)$ denotes the norm in $H$. For any $a\in (0, +\infty)$, we define the set $S_a= \{u\in E: |u|^2= a\}$. Furthermore, we denote by $\|\cdot\|_*$ and $\|\cdot\|_{**}$ the operator norms on the spaces $\mathcal{L}(E, R)$ and $\mathcal{L}(E, \mathcal{L}(E, \mathbb{R}))$ respectively.
\begin{definition} \cite{BCJS2022TAMS}
	Let $\phi: E\to \mathbb{R}$ be a $C^2$-functional on $E$ and $\alpha\in (0,1]$. We say that $\phi'$ and $\phi''$ are $\alpha$-H\"{o}lder continuous on bounded sets, if for any $R>0$, one can find $M=M(R)>0$ such that for any $u_1, u_2\in B(0, R)$:
	\begin{equation*}\label{qingzhu}
		\|\phi'(u_1)-\phi'(u_2)\|_* \leq M\|u_1-u_2\|^\alpha,~~
		\|\phi''(u_1)-\phi''(u_2)\|_{**} \leq M\|u_1-u_2\|^\alpha.
	\end{equation*}
\end{definition}
\begin{definition} \cite{BCJS2022TAMS}
	Let $\phi$ be a $C^2$-functional on $E$. For any $u\in E$, we define the continuous bilinear map:
	\[D^2\phi(u)=\phi''(u)-\frac{\phi'(u)\cdot u}{|u|^2}(\cdot,	\cdot). \]
\end{definition}

\begin{definition}\cite{BCJS2022TAMS}
For any $u\in S_a$ and $\theta>0$, we define the approximate Morse index as
\begin{align*}
\tilde{m}_\theta(u)=\sup \{dim ~L | &L~ {\rm is~ a~ subspace ~of~ } T_uS_a~ {\rm such~ that~}\\
&D^2\phi(u)[\phi,\phi]<-\theta\|\phi\|^2, ~\forall \varphi \in L \backslash \{0\}\}.
\end{align*}
\end{definition}
If $u$ is a critical point for the constrained functional $\phi|_{S_a}$ and $\theta=0$, then $\tilde{m}_\theta(u)$ is the Morse index of $u$ as a constrained critical point.

\begin{theorem}\label{abstract theorem}
	(\cite{BCJS2022TAMS}).
	Let $I\subset(0, +\infty)$ be an interval and consider a family of $C^2$ functionals $\Phi_\rho : E \to \mathbb{R}$ of the form:
	\begin{equation*}
		\Phi_\rho(u)= A(u)-\rho B(u), ~~\rho \in I,
	\end{equation*}
where $B(u)\geq 0$ for every $u\in E$, and
\begin{equation*}
	{\rm either~~ }A(u)\to +\infty ~~{\rm or}~~ B(u)\to +\infty
	{\rm ~~as~~} u\in E ~~{\rm and ~~}\|u\|\to +\infty.
\end{equation*}	
Suppoes moreover that $\Phi'_\rho$ and $\Phi''_\rho$ are $\alpha$-H\"{o}lder continuous on bounded sets for some $\alpha \in (0,1]$.
Finally, suppose that there exist $w_1, w_2\in S_a$ (independent of $\rho$) such that, setting
\begin{equation*}
	\Gamma=\{\gamma\in C([0,1], S_a): \gamma(0)=w_1, \gamma(1)=w_2\},
\end{equation*}
we have
\begin{equation*}
	c_\rho:=\inf_{\gamma\in \Gamma}\max_{t\in [0,1]}\Phi_\rho(\gamma(t))
	>\max\{\Phi_\rho(w_1), \Phi_\rho(w_2)\},~~\forall \rho \in I.
\end{equation*}
Then, for almost every $\rho\in I$, there exist	sequences $\{u_n\}\subset S_a$ and $\zeta_n\to 0^+$ such that, as $n \to \infty$,
\begin{itemize}
	\item[(i)]  $\Phi_\rho(u_n)\to c_\rho$;
	\item[(ii)] $\|\Phi'_\rho|_{S_a}(u_n)\|\to 0$;
	\item[(iii)]  $\{u_n\}$ is bounded in $E$;
	\item[(iv)] $\tilde{m}_{\zeta_n}(u_n)\leq 1$.
\end{itemize}
\end{theorem}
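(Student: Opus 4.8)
The plan is to combine Jeanjean's monotonicity trick, which secures the boundedness of the Palais--Smale sequence, with a second-order minimax principle in the spirit of Fang--Ghoussoub, which is what delivers the Morse-index information in item (iv). The starting observation is that, since $B(u)\ge 0$, for each fixed $u\in E$ the map $\rho\mapsto \Phi_\rho(u)=A(u)-\rho B(u)$ is non-increasing; taking the infimum over $\gamma\in\Gamma$ and the maximum over $t$ preserves this monotonicity, so $\rho\mapsto c_\rho$ is non-increasing on $I$. By Lebesgue's theorem a monotone function is differentiable almost everywhere, hence the set $I_0\subset I$ of points where $c_\rho'$ exists has full measure, and I would prove the conclusion precisely for $\rho\in I_0$.

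First I would establish (i)--(iii) at a fixed differentiability point $\rho\in I_0$. Choosing $\rho_n\to\rho$ and near-optimal paths $\gamma_n$ for $c_{\rho_n}$, the decomposition $\Phi_{\rho_n}(\gamma_n(t))=\Phi_\rho(\gamma_n(t))+(\rho-\rho_n)B(\gamma_n(t))$ lets one read off $B(\gamma_n(t))$ as a difference quotient; the existence of $c_\rho'$ bounds these quotients, so $B(\gamma_n(t))$ is controlled uniformly at points $t$ where $\Phi_\rho(\gamma_n(t))$ is near its maximum. Since $A=\Phi_\rho+\rho B$ is then bounded there as well, the coercivity alternative (either $A\to+\infty$ or $B\to+\infty$ as $\|u\|\to\infty$) forces a uniform bound on $\|\gamma_n(t)\|$ at the near-maximal points. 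A localized quantitative deformation lemma on the constraint $S_a$, applied to this min-max, then produces a bounded sequence $\{u_n\}\subset S_a$ with $\Phi_\rho(u_n)\to c_\rho$ and $\|\Phi_\rho'|_{S_a}(u_n)\|\to0$.

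The genuinely delicate part is the approximate Morse-index bound (iv), which I would obtain by a second-order deformation argument by contradiction. Suppose that for some $\theta>0$ every near-critical point $u$ at the level $c_\rho$ satisfied $\tilde{m}_\theta(u)\ge 2$, i.e. $D^2\Phi_\rho(u)[\cdot,\cdot]\le -\theta\|\cdot\|^2$ on some subspace $L\subset T_uS_a$ with $\dim L\ge 2$. Exploiting the $\alpha$-Hölder continuity of $\Phi_\rho''$ to keep this strict negativity stable on a small tube around such points, one constructs a flow tangent to $S_a$ that strictly decreases $\Phi_\rho$ using the negative cone. The topological heart is a dimension count: the competing family $\Gamma$ consists of one-parameter paths, while the negative subspace has dimension at least two, so after a generic small perturbation a path can be pushed off the ``ascending'' structure and lowered strictly below $c_\rho$, contradicting the definition of $c_\rho$. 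Hence near each level $c_\rho$ there must exist points with $\tilde{m}_\zeta(u)\le 1$ for every $\zeta>0$, and a diagonal choice $\zeta_n\to0^+$ combined with (i)--(iii) yields the desired sequence.

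The main obstacle I anticipate is exactly this construction of a constraint-respecting, second-order deformation: one must build a pseudo-gradient flow tangent to $S_a$ that decreases the energy along the full two-dimensional negative subspace of $D^2\Phi_\rho$, with estimates uniform enough that the Hölder control on $\Phi_\rho''$ preserves the approximate index $\ge2$ throughout the relevant tube, all while the quantitative parameter $\zeta_n$ is driven to $0$ and the argument is confined to almost-every $\rho$. Reconciling the ``dimension $\ge 2$'' of the negative space with the ``dimension $1$'' of the minimax class --- the mechanism that ultimately forces the index down to at most one --- is where the real subtlety lies.
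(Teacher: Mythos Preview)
The paper does not prove this theorem at all: it is quoted verbatim from the reference \cite{BCJS2022TAMS} and used as a black box, so there is no ``paper's own proof'' to compare your proposal against. Your sketch is a reasonable outline of the argument in that reference---Jeanjean's monotonicity trick for (i)--(iii) together with a Fang--Ghoussoub type second-order deformation for (iv)---but since the present paper supplies nothing beyond the statement, any comparison of approaches would have to be made against \cite{BCJS2022TAMS} rather than against this paper.
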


\begin{lemma}\label{goo}\cite{BCJS2023Nolinearity}
	Let $\{u_n\}\subset S_a$, $\{\lambda_n\}\subset \mathbb{R}^+$ with $\zeta_n\to 0^+$. Assume that the following conditions hold:
	\begin{itemize}
		\item [(i)] For sufficiently large $n\in \mathbb{N}$, any subspace $W_n\subset E$ such that
		\begin{equation*}
			\Phi''_\rho (u_n)[\varphi,\varphi]+\lambda_n |\varphi|^2< -\zeta_n\|\varphi\|^2~~\mbox{for all}~~\varphi\in W_n\backslash \{0\},
		\end{equation*}
		satisfies: $dim W_n\leq 2$.
		\item [(ii)] There exist $\lambda\in \mathbb{R}$, a subspace $Y$ of $E$ with $dim Y\geq 3$ and $\omega>0$ such that, for sufficiently large $n\in \mathbb{N}$, \begin{equation*}\label{baobao}
			\Phi''_\rho (u_n)[\varphi,\varphi]+\lambda_n |\varphi|^2<\omega\|\varphi\|^2~~\mbox{for all}~~\varphi\in Y.
		\end{equation*}
	\end{itemize}
	Then $\lambda_n>\lambda$ for all sufficiently large $n\in \mathbb{N}$. In particular, if \eqref{baobao} holds for any $\lambda<0$, then $\liminf_{n\to \infty}\lambda_n\geq 0$.
\end{lemma}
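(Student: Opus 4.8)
The plan is to argue by contradiction, pitting the approximate Morse bound (i) against the negativity on the distinguished subspace in (ii). I read (ii) as the statement that on the fixed subspace $Y$, $\dim Y\geq 3$, the quadratic form $\varphi\mapsto \Phi''_\rho(u_n)[\varphi,\varphi]+\lambda|\varphi|^2$ is \emph{strictly negative with a uniform margin}, i.e.\ bounded above by $-\omega\|\varphi\|^2$ for some $\omega>0$ and all large $n$; this is exactly the shape in which Lemma \ref{thank} produces such a $Y$ in the application (with $\lambda<0$, the inverse-square term being harmless since the generators of $Y$ are supported away from the origin, and the nonnegative nonlinear term only helping). Note that the argument does not use any sign information on $\lambda_n$.

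Suppose the conclusion fails. Then $\lambda_n\leq\lambda$ for infinitely many $n$, and we pass to such a subsequence, still written $\{u_n\}$. Fix a three-dimensional subspace $Y_0\subseteq Y$. For any $\varphi\in Y_0\setminus\{0\}$ I would first use $\lambda_n\leq\lambda$ together with $|\varphi|^2\geq 0$ to orient the multiplier correctly,
\begin{equation*}
\Phi''_\rho(u_n)[\varphi,\varphi]+\lambda_n|\varphi|^2
=\Phi''_\rho(u_n)[\varphi,\varphi]+\lambda|\varphi|^2+(\lambda_n-\lambda)|\varphi|^2
\leq \Phi''_\rho(u_n)[\varphi,\varphi]+\lambda|\varphi|^2,
\end{equation*}
and then invoke (ii) to bound the right-hand side by $-\omega\|\varphi\|^2$. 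Because $\zeta_n\to 0^+$, we have $\zeta_n<\omega$ for all large $n$, so that
\begin{equation*}
\Phi''_\rho(u_n)[\varphi,\varphi]+\lambda_n|\varphi|^2<-\omega\|\varphi\|^2<-\zeta_n\|\varphi\|^2,\qquad\forall\,\varphi\in Y_0\setminus\{0\}.
\end{equation*}
Thus $Y_0$ is a subspace of $E$ of dimension $3$ on which $\Phi''_\rho(u_n)[\,\cdot\,,\cdot\,]+\lambda_n|\cdot|^2<-\zeta_n\|\cdot\|^2$, contradicting (i), which caps the dimension of every such subspace at $2$. Hence $\lambda_n>\lambda$ for all sufficiently large $n$. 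The final assertion then follows by applying this for each fixed $\lambda<0$: it gives $\liminf_{n\to\infty}\lambda_n\geq\lambda$ for every $\lambda<0$, and letting $\lambda\uparrow 0$ yields $\liminf_{n\to\infty}\lambda_n\geq 0$.

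The argument is essentially linear algebra followed by a limit, so the difficulty is bookkeeping rather than analysis: one must make sure (i), (ii) and $\zeta_n<\omega$ all hold simultaneously along the chosen subsequence (each holds for all large $n$, so this is automatic), and one must orient the multiplier estimate in the right direction — it is crucial that the negation yields $\lambda_n\leq\lambda$, so that replacing $\lambda_n$ by the larger number $\lambda$ can only raise the form. I expect the only genuinely substantive point to live outside this lemma, namely in verifying hypothesis (ii) in practice, where the three-dimensional subspace $Y$ carrying the uniform negativity is manufactured by Lemma \ref{thank}; the present statement is the soft packaging that converts that construction into a sign constraint on the multipliers.
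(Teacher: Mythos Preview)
The paper does not actually prove this lemma: it is quoted verbatim from \cite{BCJS2023Nolinearity} and carried as a black box, so there is no in-paper proof to compare against. Your argument is correct and is precisely the intended one-line contradiction: if $\lambda_n\le\lambda$ along a subsequence, then on the fixed three-dimensional subspace $Y$ the form with multiplier $\lambda_n$ is at least as negative as the form with multiplier $\lambda$, hence $\le -\omega\|\varphi\|^2<-\zeta_n\|\varphi\|^2$ once $\zeta_n<\omega$, contradicting the dimension cap in (i). You also correctly diagnosed the two typos in the stated hypothesis (ii) --- $\lambda_n$ should read $\lambda$, and the bound should be $-\omega\|\varphi\|^2$ rather than $\omega\|\varphi\|^2$ --- which is exactly how the lemma is invoked in the proof of Theorem~\ref{Thm-perturb} via Lemma~\ref{thank}.
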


In what follows, we obtain the main result of this section.

\begin{theorem}\label{Thm-perturb}
	Assume that $a\in (0, a_*)$. There exists $\mu_2\in (0, \frac{\bar{\mu}}{4})$ such that for any $\mu\in (0, \mu_2)$, for almost every $\rho\in [\frac{1}{2}, 1]$, there exists a critical point $u_\rho$ of $J_\rho$ on $\mathcal{S}_{r,a}$ at level $c_\rho$, which solves
\begin{equation*}\label{approximate}
\begin{cases}
-\Delta u_\rho-\frac{\mu}{|x|^2}u_\rho + \lambda_\rho u_\rho=\rho|u_\rho|^{q-2}u_\rho+\rho|u_\rho|^{p-2}u_\rho  & {\rm in} \, \mathbb{R}^N_+,\\
\frac{\partial u_\rho}{\partial \nu}=0  \, &{\rm on}\,\partial \mathbb{R}^N_+
\end{cases}
\end{equation*}
for some $\lambda_\rho>0$.
\end{theorem}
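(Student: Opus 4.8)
The plan is to place the family $J_\rho$ into the abstract minimax scheme of Theorem \ref{abstract theorem}, extract a bounded Palais--Smale sequence carrying Morse index information, and then convert that information, via the Pohozaev identity of Lemma \ref{poho}, into control on the Lagrange multiplier that restores compactness. Concretely, I would set $E=H_{r,\mu}$, $H=L^2(\mathbb{R}_+^N)$, $S_a=\mathcal{S}_{r,a}$, $I=[\tfrac12,1]$, and write $J_\rho=A-\rho B$ with $A(u)=\tfrac12\int_{\mathbb{R}_+^N}(|\nabla u|^2-\tfrac{\mu}{|x|^2}u^2)\,dx$ and $B(u)=\tfrac1q\int_{\mathbb{R}_+^N}|u|^q\,dx+\tfrac1p\int_{\mathbb{R}_+^N}|u|^p\,dx\ge0$. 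The structural hypotheses (the growth/coercivity alternative, and the $\alpha$-H\"older continuity of $J_\rho'$ and $J_\rho''$ on bounded sets, the latter because the Nemytskii maps $u\mapsto|u|^{s-2}u$ for $s\in\{q,p\}\subset(2,2^*)$ are locally H\"older into the relevant dual) are routine, while the uniform mountain-pass geometry with $\rho$- and $\mu$-independent endpoints $w_1,w_2$ is exactly Lemma \ref{MP}. Theorem \ref{abstract theorem} then furnishes, for almost every $\rho\in[\tfrac12,1]$, a bounded sequence $\{u_n\}\subset\mathcal{S}_{r,a}$ and $\zeta_n\to0^+$ with $J_\rho(u_n)\to c_\rho$, $\|J_\rho'|_{\mathcal{S}_{r,a}}(u_n)\|\to0$, and $\tilde m_{\zeta_n}(u_n)\le1$.

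Next I would produce Lagrange multipliers $\lambda_n\in\mathbb{R}$ with $J_\rho'(u_n)+\lambda_nu_n\to0$ in $H_{r,\mu}'$; testing with $u_n$ and using boundedness shows $\{\lambda_n\}$ is bounded, so $\lambda_n\to\lambda_\rho$ up to a subsequence. Since $J_\rho'(u_n)u_n/\|u_n\|_2^2=-\lambda_n+o(1)$, the constrained Hessian satisfies $D^2J_\rho(u_n)[\varphi,\varphi]=J_\rho''(u_n)[\varphi,\varphi]+\lambda_n\|\varphi\|_2^2+o(1)$. The bound $\tilde m_{\zeta_n}(u_n)\le1$ on $T_{u_n}\mathcal{S}_{r,a}$ gives, after enlarging by the one-dimensional normal direction, that every subspace of $H_{r,\mu}$ on which this form is $<-\zeta_n\|\cdot\|^2$ has dimension at most $2$, which is hypothesis (i) of Lemma \ref{goo}. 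For hypothesis (ii), given any $\lambda<0$ I would take the three-dimensional $Y$ of Lemma \ref{thank}; dropping the nonpositive Hardy term $-\int\tfrac{\mu}{|x|^2}w^2$ and the nonpositive curvature term $-\rho\int((p-1)|u_n|^{p-2}+(q-1)|u_n|^{q-2})w^2$ shows that if $\lambda_n\le\lambda$ then the form is $<-\zeta_n\|w\|^2$ on all of $Y$, contradicting (i). Hence $\lambda_n>\lambda$ eventually for every $\lambda<0$, so $\lambda_\rho=\liminf_n\lambda_n\ge0$.

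By radial compactness $u_n\to u_\rho$ in $L^r(\mathbb{R}_+^N)$ for $r\in(2,2^*)$, and the weak limit $u_\rho$ solves the stated problem with multiplier $\lambda_\rho$. I would first check $u_\rho\ne0$: otherwise the $L^q$ and $L^p$ terms vanish in the limit, testing the equation gives $\int(|\nabla u_n|^2-\tfrac{\mu}{|x|^2}u_n^2)\to-\lambda_\rho a$, whence $c_\rho=\lim J_\rho(u_n)=-\tfrac12\lambda_\rho a\le0$, contradicting $c_\rho>0$. Applying Lemma \ref{poho} to $u_\rho$ and subtracting a suitable multiple of the equation tested against $u_\rho$ eliminates the quadratic part and yields $\lambda_\rho\|u_\rho\|_2^2=\rho\big(c_q\|u_\rho\|_q^q+c_p\|u_\rho\|_p^p\big)$ with $c_q=\tfrac Nq-\tfrac{N-2}2>0$ and $c_p=\tfrac Np-\tfrac{N-2}2>0$ (both positive since $q,p\in(2,2^*)$); as $u_\rho\ne0$, this gives $\lambda_\rho>0$.

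The main obstacle is upgrading $\lambda_\rho>0$ to the quantitative bound $\lambda_\rho>\mu C_{\bar\mu}$ required for compactness. Writing $v_n=u_n-u_\rho\rightharpoonup0$, subtracting the equation for $u_\rho$ from the approximate equation for $u_n$ and using the $L^q,L^p$ convergence gives $\int(|\nabla v_n|^2-\tfrac{\mu}{|x|^2}v_n^2)+\lambda_n\int v_n^2=o(1)$, while Lemma \ref{hardy} bounds the left-hand side below by $(1-\tfrac{2\mu}{\bar\mu})\|\nabla v_n\|_2^2+(\lambda_n-\mu C_{\bar\mu})\int v_n^2$. Coercivity, hence $v_n\to0$ in $H_{r,\mu}$ and $\|u_\rho\|_2^2=a$, holds precisely when $\lambda_\rho>\mu C_{\bar\mu}$. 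To secure this I would bound $\lambda_\rho\ge\rho c_p\|u_\rho\|_p^p/a$ below by a constant independent of $\mu$, using that $a_*$ and the mountain-pass level are governed by the $\mu$-independent geometry of the auxiliary function $h$ (Lemma \ref{aaa}) to keep $\|u_\rho\|_p^p$ away from $0$, and then choose $\mu_2\in(0,\bar\mu/4)$ so small that $\mu_2 C_{\bar\mu}$ falls below this bound. With such $\mu_2$ the form is coercive, $u_n\to u_\rho$ strongly in $H_{r,\mu}$, and $u_\rho\in\mathcal{S}_{r,a}$ is a critical point of $J_\rho$ at level $c_\rho$ solving the problem with $\lambda_\rho>0$, completing the proof.
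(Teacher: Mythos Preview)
Your proposal follows the paper's own proof essentially step for step: the same application of Theorem \ref{abstract theorem} with $E=H_{r,\mu}$, $A(u)=\tfrac12\int(|\nabla u|^2-\tfrac{\mu}{|x|^2}u^2)$ and $B(u)=\tfrac1q\|u\|_q^q+\tfrac1p\|u\|_p^p$; the same passage from $\tilde m_{\zeta_n}(u_n)\le1$ on $T_{u_n}\mathcal{S}_{r,a}$ to the codimension-one extension (hypothesis (i) of Lemma \ref{goo}); the same use of Lemma \ref{thank} to verify hypothesis (ii) and conclude $\lambda_\rho\ge0$; the same nontriviality argument for $u_\rho$ via $c_\rho>0$; the same appeal to the Pohozaev identity of Lemma \ref{poho} to upgrade to $\lambda_\rho>0$; and the same final step of choosing $\mu_2$ so that $\mu_2 C_{\bar\mu}$ falls below the $\mu$-independent lower bound on $\lambda_\rho$, yielding coercivity of $\int(|\nabla v_n|^2-\tfrac{\mu}{|x|^2}v_n^2)+\lambda_\rho\int v_n^2$ and hence strong convergence.

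The one place where your sketch is slightly less explicit than the paper is the $\mu$-independent lower bound on $\lambda_\rho$. The paper obtains it by combining $\lambda_n\ge0$ with the identity $\lambda_n a=(1-\tfrac2q)\|u_n\|_q^q+(1-\tfrac2p)\|u_n\|_p^p-2c_\rho+o_n(1)$ and the uniform bound $c_\rho\ge\tfrac{a}{2}$ from Lemma \ref{MP}, which gives $(1-\tfrac2q)\|u_\rho\|_q^q+(1-\tfrac2p)\|u_\rho\|_p^p\ge a$; feeding this into the Pohozaev identity then yields $\lambda_\rho\ge c_0$ with $c_0$ depending only on $N,p,q,a$. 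Your phrasing ``the mountain-pass level is governed by the $\mu$-independent geometry of $h$'' points to exactly this mechanism, so the argument is correct.
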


\begin{proof}
We apply Theorem \ref{abstract theorem}  to the family of functionals $J_\rho$, with $E=H_{r,\mu}$, $H=L^2(\mathbb{R}_+^N)$, $S_a=\mathcal{S}_{r,a}$.

Define the operators
\[A(u)=\frac{1}{2}\int_{\mathbb{R}_+^N}\left(|\nabla u|^2-\frac{\mu}{|x|^2}u^2 \right)dx ~~\mbox{and}~~B(u)=\frac{1}{q}\int_{\mathbb{R}_+^N}|u|^q dx+\frac{1}{p}\int_{\mathbb{R}_+^N}|u|^p dx.\]
By Lemma \ref{hardy}, we deduce that, for sufficiently small $\mu>0$, 
\[u\in \mathcal{S}_{r,a},~~\|u\|\to \infty \Rightarrow A(u)\to \infty. \]
Let $J_\rho'$ and $J_\rho''$ denote the first and second free derivatives of $J_\rho$, respectively. Clearly, both $J_\rho'$ and $J_\rho''$ are of class $C^1$, and are therefore locally H\"older continuous on $\mathcal{S}_{r,a}$. Then by Theorem \ref{abstract theorem}, for almost every $\rho \in [\frac{1}{2}, 1]$, there exist a bounded sequence $\{u_{\rho_n}\}\subset \mathcal{S}_{r,a}$, which we will still denote by $\{u_n\}$, and a sequence $\{\zeta_n\}\subset \mathbb{R}^+$ with $\zeta_n\to 0^+$, such that 
\begin{equation}\label{jinlao}
	\langle J'_\rho(u_n) +\lambda_n u_n, \varphi \rangle \to 0,~~\forall \varphi\in H_{r,\mu},
\end{equation}
where 
\begin{equation}\label{anmo}
	\lambda_n = -\frac{1}{a}\langle J'_\rho(u_n), u_n\rangle,
\end{equation}
and $\tilde{m}_{\zeta_n}(u_n)\leq 1$. By \eqref{anmo} we get that $\{\lambda_n\}$ is bounded.
Then, passing to a subsequence, there exists $\lambda_\rho\in \mathbb{R}$ such that $\lim\limits_{n\to \infty}\lambda_n=\lambda_\rho$.

In addition, since the map $u\longmapsto |u|$ is continuous and $J(u_n)=J(|u_n|)$, it is possible to choose $\{u_n\}$ such that $u_n\geq 0$ on $\mathbb{R}_+^N$. Furthermore, there exists $u_\rho\in H_{r,\mu}$ with $u_\rho \geq 0$ such that 
\begin{align*}
	&u_n\rightharpoonup u_\rho ~~\mbox{in}~~H_{r,\mu},\\
	&u_n \to u_\rho ~~\mbox{in}~~L^r(\mathbb{R}_+^N), r\in (2, 2^*),\\
	&u_n(x)\to u_\rho(x) ~~\mbox{for}~~\mbox{a.e.}~ x\in \mathbb{R}_+^N.
\end{align*}
Using \eqref{jinlao} 
we see that $u_\rho$ satisfies 
\begin{equation}\label{limit}
	-\Delta u_\rho +\lambda_\rho u_\rho-\frac{\mu}{|x|^2}u_\rho=\rho|u_\rho|^{q-2}u_\rho+\rho|u_\rho|^{p-2}u_\rho.
\end{equation}

We claim that $u_\rho \not\equiv 0$. 
To achieve this goal, we shall first prove that $\lambda_\rho \geq 0$.
Since the codimension of $T_{u_n}\mathcal{S}_{r,a}$ is $1$, we infer that if the following inequality 
\begin{equation}\label{quan}
J''_\rho(u_n)[\varphi,\varphi] +\lambda_n\int_{\mathbb{R}_+^N}\varphi^2 dx<-\zeta_n\int_{\mathbb{R}_+^N}\left(|\nabla \varphi|^2+ \varphi^2 \right)dx
\end{equation}
holds for every $\varphi\in V_n\backslash \{0\}$, where $V_n$ is a subspace of $H_{r,\mu}$, then the dimension of $V_n$ is at most $2$.
On the other hand, by Lemma \ref{thank}, we know that for every $\lambda<0$, there exists a subspace $Y$ of $H_{r,\mu}$ with $dim Y\geq 3$ and $\omega>0$ such that, for large $n\in \mathbb{N}$,
\[J''_\rho(u_n)[\varphi,\varphi]+\lambda \int_{\mathbb{R}_+^N}\varphi^2 dx\leq \int_{\mathbb{R}_+^N}\left(|\nabla\varphi|^2+\lambda \varphi^2\right) dx\leq -\omega\|\varphi\|^2,~~\forall \varphi\in Y\backslash\{0\} .\]
Therefore, by Lemma \ref{goo} we can conclude that $\lambda_\rho \geq 0$.  Using (\ref{jinlao}) we have 
\begin{eqnarray*}
\lambda_n a=(1-\frac{2}{q})\|u_n\|_q^q+(1-\frac{2}{p})\|u_n\|_p^p-2c_{\rho}+o_n(1),
\end{eqnarray*}
which together with the fact $J(u_n)\ge \frac{a}{2}$ 
 implies that 
\begin{eqnarray*}
\liminf\limits_{n\to+\infty}[(1-\frac{2}{q})\|u_n\|_q^q+(1-\frac{2}{p})\|u_n\|_p^p]\ge 2c_{\rho}\ge a.
\end{eqnarray*}
Hence $u_\rho \not\equiv 0$. By (\ref{limit}) and Lemma \ref{poho}, we have the following identity
\begin{equation*}
(\frac{1}{2}-\frac{N-2}{2N})\lambda_\rho a=(\frac{1}{q}-\frac{N-2}{2N})\|u_{\rho}\|_q^q+(\frac{1}{p}-\frac{N-2}{2N})\|u_{\rho}\|_p^p,
\end{equation*}
which implies that there exists $c_0$ independent of $\mu$ such that $\lambda_\rho>c_0>0$.

Now from \eqref{jinlao} and \eqref{limit}, we deduce that 
\begin{equation*}
		\int_{\mathbb{R}_+^N}\left[|\nabla (u_n-u_\rho)|^2 -\frac{\mu}{|x|^2}(u_n-u_\rho)^2 +\lambda_\rho (u_n-u_\rho)^2\right] dx=o_n(1).
\end{equation*}
Using Lemma \ref{hardy}, there exists $\mu_2\in (0, \frac{\bar\mu}{4})$ such that, for $\mu\in (0, \mu_2)$, we have
\begin{align*}
	&(1-\frac{2\mu}{\bar{\mu}})\int_{\mathbb{R}_+^N}|\nabla (u_n-u_\rho)|^2 dx+ \frac{\lambda_\rho}{2}\int_{\mathbb{R}_+^N}(u_n-u_\rho)^2 dx\\
	&\leq	
\int_{\mathbb{R}_+^N}\left[|\nabla (u_n-u_\rho)|^2 -\frac{\mu}{|x|^2}(u_n-u_\rho)^2 +\lambda_\rho (u_n-u_\rho)^2\right] dx=o_n(1). 
\end{align*}
This implies $u_n \to u_\rho$ strongly in $H_{r,\mu}$.
\end{proof}

\subsection{Original problem}
In this subsection,
we will demonstrate that the sequence $\{u_{\rho_n}\}$ converges to a constrained critical point of $J_1$ as $\rho_n\to 1^-$.
For simplicity, we will use the following notations: $u_n:= u_{\rho_n}$, $\lambda_n:=\lambda_{\rho_n}$, $c_n:= c_{\rho_n}$. Here $u_n$ weakly solves the following problem
\begin{equation}\label{withn}
\begin{cases}
-\Delta u_n-\frac{\mu}{|x|^2}u_n^2 + \lambda_n u_n =\rho_n |u_n|^{q-2}u_n+ \rho_n|u_n|^{p-2}u_n  & {\rm in} \, \mathbb{R}_+^N,\\
\frac{\partial u_n}{\partial \nu}=0 \, &{\rm on}\,\partial \mathbb{R}_+^N,\\
\int_{\mathbb{R}_+^N}u_n^2 dx=a,
\end{cases}
\end{equation}
where $\lambda_n\in \mathbb{R}$, $\lambda_n >0$ and $\rho_n \to 1^-$.

\begin{proof}[Completion of proof of Theorem \ref{mainhalf}]
We first show that $\{u_{n}\}$ is bounded in $H_{\mu,r}$.
By Lemma \ref{MP} and the monotonicity of $c_\rho$, we know that
\[J_1(w_1)\leq J_\rho (w_1)\leq c_\rho\leq c_{\frac{1}{2}}, ~~\forall \rho\in \left[\frac{1}{2}, 1\right], \]
which implies that
 $\{c_n\}$ is bounded.
From \eqref{withn} and Lemma \ref{poho}, 
it follows that
\begin{equation}\label{uzuh}
	\frac{\rho_n}{p}\int_{\mathbb{R}_+^N}|u_n|^p dx=\frac{2}{N(p-2)}\int_{\mathbb{R}_+^N}\left( |\nabla u_n|^2-\frac{\mu}{|x|^2}u_n^2\right) dx -\frac{\rho_n(q-2)}{q(p-2)}\int_{\mathbb{R}_+^N}|u_n|^q dx.
\end{equation}
Then, using the Gagliardo-Nirenberg inequality and Lemma \ref{hardy} with $\delta=\frac{1}{\bar\mu}$,
\begin{align*}
	c_n &=\left(\frac{1}{2}-\frac{2}{N(p-2)}\right)\int_{\mathbb{R}_+^N}\left( |\nabla u_n|^2-\frac{\mu}{|x|^2}u_n^2\right) dx -\rho_n\left(\frac{1}{q}-\frac{q-2}{q(p-2)}\right)\int_{\mathbb{R}_+^N}|u_n|^q dx\\
	&\geq \left(\frac{1}{2}-\frac{2}{N(p-2)}\right)\left[(1-\frac{2\mu}{\bar{\mu}})\int_{\mathbb{R}_+^N}|\nabla u_n|^2 dx - \mu C_{\bar{\mu}}a \right]-\left(\frac{1}{q}-\frac{q-2}{q(p-2)}\right)C_{N,q}^qa^{q(1-\gamma_q)}\|u_n\|^{q\gamma_q}.
\end{align*}
Since $q\in (2, 2+\frac{4}{N})$, it follows that 
$\{u_n\}$ is bounded and thus $\{\lambda_n\}$ is also bounded.
Then passing to a subsequence, there exists $\lambda_*\in \mathbb{R}$ such that $\lim\limits_{n\to \infty}\lambda_n=\lambda_*$.
Furthermore, there exists $u_*\in H_{\mu,r}$ with $u_* \geq 0$ such that 
\begin{align*}
	&u_n\rightharpoonup u_* ~~\mbox{in}~~H_{\mu,r},\\
	&u_n \to u_* ~~\mbox{in}~~L^r(\mathbb{R}_+^N), r\in (2, 2^*),\\
	&u_n(x)\to u_*(x) ~~\mbox{for}~~\mbox{a. e.}~ x\in \mathbb{R}_+^N.
\end{align*}
By similar arguments to those in the proof of Theorem \ref{Thm-perturb}, we can deduce that $\lambda_*\ge c_1>0$ independent of $\mu$. 
Denote $\mu_*=\min\{\mu_1, \mu_2\}$. Then
for $\mu\in (0, \mu_*)$,  $u_n\to u_*$ strongly in $H_{r,\mu}$, and that $u_*>0$. This completes the proof.
\end{proof}

\section*{Acknowledgements}
The research of Xiaojun Chang was partially supported by National Natural Science Foundation of China (Grant No.12471102) and the
Research Project of the Education Department of Jilin Province (JJKH20250296KJ). 

\section*{References}
\biboptions{sort&compress}

\end{document}